\DeclareFontFamily{OT1}{pzc}{}
\DeclareFontShape{OT1}{pzc}{m}{it}{<-> s * [1.10] pzcmi7t}{}
\DeclareMathAlphabet{\mathpzc}{OT1}{pzc}{m}{it}
\DeclareMathAlphabet{\pazocal}{OMS}{zplm}{m}{n}
\newtheorem{defin}{Definition}[section]
\newtheorem{theorem}{Theorem}[section]
\newtheorem{proposition}{Proposition}[section]
\newtheorem{lemma}{Lemma}[section]
\newtheorem*{corol}{Corollary}
\newcommand{\R}{\mathbb{R}}
\newcommand{\T}{\mathbb{T}}
\newcommand{\N}{\mathbb{N}}
\newcommand{\X}{\times}
\newcommand{\D}{\partial}
\newcommand{\curl}{\mathop\mathrm{curl}\nolimits}
\newcommand{\ve}{\mathbf}
\newcommand{\eqdef}{:=}
\numberwithin{equation}{section}
 \email{luca.bisconti@unifi.it}
 \email{davide.catania@unibs.it}
\begin{document}

\sloppy


\title[Horizontally filtered Boussinesq equation on a strip-like
region]{Global well-posedness of the two-dimensional horizontally
  filtered simplified Bardina turbulence model On a strip-like region}

\author[Luca Bisconti, Davide Catania]{}

\subjclass[2010]{Primary: 76D05, 35B65; Secondary: 35Q30, 76F65,
  76D03.}  \keywords{Simplified Bardina model, Navier-Stokes
  equations, turbulent flows, Large Eddy Simulation (LES), anisotropic
  filters, unbounded domains, global attractor.}

\begin{abstract}
We consider the 2D simplified Bardina turbulence model, with
	horizontal filtering, in an unbounded strip-like domain. We prove
	global existence and uniqueness of weak solutions in a suitable
	class of anisotropic weighted Sobolev spaces.
\end{abstract}

\maketitle

\centerline{\scshape Luca Bisconti } \medskip {\footnotesize
  \centerline{Universit\`a degli Studi di Firenze}
  \centerline{Dipartimento di Matematica e Informatica ``U. Dini''}
  \centerline{Via S. Marta 3, I-50139 Firenze, Italia}
} 

\medskip

\centerline{\scshape Davide Catania} \medskip {\footnotesize
  \centerline{Universit\`a eCampus} \centerline{and}
  \centerline{Universit\`a degli Studi di Brescia} \centerline{Sezione
    Matematica (DICATAM)} \centerline{Via Valotti 9, I-25133 Brescia,
    Italia} }

\bigskip

\section{Introduction}
In the present paper we give some results mainly connected with the
regularity and the long-time behavior of the viscous simplified
Bardina turbulence model (with horizontal filtering) in a strip-like
region $\Omega\subseteq \R^2$, aimed at proving existence and
uniqueness of weak solutions in a suitable class of weighted Sobolev
spaces.

The Bardina closure model for turbulence was introduced {in 1980 by
  J. Bardina, J. H. Ferziger and W. C. Reynolds in \cite{Bardina}},
and later simplified and analyzed in \cite{LayLew} and in
\cite{CaoLunTiti}.  Indeed, the 3D simplified Bardina turbulence
system was proposed in \cite{LayLew} as a regularization model, for
small values of the scale parameter $\alpha$, of the 3D Navier--Stokes
equations for the purpose of numerical simulations. {Analysis of the
  global behavior of the pertinent solutions in a bounded domain, with
  periodic boundary conditions, appears in \cite{CaoLunTiti}.}
{Global} well-posedness for the 2D simplified Bardina model was
established in \cite{CaoTiti}.  {Again, in space-periodic domains, the
  inviscid simplified Bardina model is a regularizing system for the
  3D Euler equations; this because it is globally well-posed and it
  approximates the 3D Euler equations without adding spurious
  regularizing terms (see \cite{LayLew}).}

The behavior of the solutions for the simplified Bardina model {(in
  both 2D and 3D cases)} changes considerably {depending on whether
  the integration domain is bounded.  This is a basic point in
  studying general properties as regularity on the long-time period
  and dynamics (in particular, existence of attractors).}  {More
  generally, this remark applies to the solutions of a broader class
  of dissipative systems (see, e.g., \cite{AntZelik, EfZe,
    Miranv-Zelik, ZelikNSE}).}  In fact, unlike the case of bounded
domains, for some types of solutions to PDEs in unbounded regions
(such as spatially periodic patterns, travelling waves, etc...), {we
  can not expect to have uniform control on the energy; rather the
  energy of these solutions may blow up to infinity.} Again, due to
the unboundedness of $\Omega$, compactness for the {semigroup solution
  operator} can not be retrieved by using standard Sobolev embeddings
(there are no compact inclusions).  Hence, in this case, the
standard choices for bounded domains of the phase space, as
$L^p(\Omega)$, $W^{k,p}(\Omega)$ or $H^p(\Omega)$, $1 \leq p <
+\infty$, $k\in \N$, do not seem appropriate.

Even in the promising situation in which the solutions are bounded as
$|x|\to +\infty$ in $\Omega $, i.e. they belong to $L^\infty(\Omega)$,
the study of their behavior is not necessarily simplified since this
space is analytically awkward to use: on one hand, strong requirements
on the initial data are needed to have solutions in such a space; on
the other hand, the study of dynamics in this phase space {results} to
be more intricate since one does not have at disposal analytical
semigroups, maximal regularity properties for semigroups, etc...

A reasonable alternative is {using weighted Sobolev spaces (see, e.g.,
  \cite{Ab, BabinVish, Miranv-Zelik})} that, in principle, can contain
sufficiently regular, spatially bounded solutions on the long-time
period. In such a situation it is possible to study the semigroup
generated by the considered system and to check whether it admits a
global attractor in a {suitable} weighted phase space.  A main
advantage of this approach is that weighted Sobolev spaces are rather
handy to use since {they enjoy regularity, interpolation and embedding
  properties which are similar to those of the usual Sobolev spaces
  $W^{k,p}(\Omega)$ for bounded domain.}

However, proving estimates in such spaces is more complicated than in
the standard ones and, for our analysis, we find convenient to follow
the same path as in \cite{Babin-1992} (see also
\cite{Celebi-Kalant-Polat, Polat}).  In so doing, we consider the 2D
Navier--Stokes system in terms of {a stream function, $v$, and derive
  formally the 2D simplified Bardina with horizontal filtering.}

We now introduce the considered 2D simplified Bardina model for the
potential $v$ connected with the vector field $\mathbf{v}= (v_1,
  v_2)$ (here, $\mathbf{v}$ is a regularizing vector field associated
with the velocity field, $\mathbf{u}$, of the 2D Navier--Stokes
equations \eqref{eq:NS-2D-a} below, i.e. $\mathbf{v}\approx
\mathbf{u}$ and $v_1 = \D_2 v$, $v_2 = -\D_1 v$), on the strip-like
region $\Omega\subset \R^2$, i.e.:
{\begin{equation} \label{eq:Bardina-visc} \left. \begin{array}{ll}
        (1-\alpha^2\D_1^2) \Delta \D_t v +B(v, v) - \nu
        (1-\alpha^2\D_1^2)\Delta^2 v = g, & x \in \Omega \textrm{ and } t \in \R^+,\\
        v\vert_{t=0} = v_0(x), & x=(x_1, x_2) \in \Omega,\\
        v(x, t)= 0,\; \nabla v(x, t) =0, \; \D_1\nabla v(x, t) =0, &
        x\in\D \Omega \textrm{ and } t \in \R^+,
      \end{array}\right.
  \end{equation}
  where $B(v,v)\eqdef \D_2 v \D_1 \Delta v - \D_1 v \D_2 \Delta v$,}
$\alpha>0$ is a scale parameter, $\nu > 0$ is the kinematic viscosity,
$g$ is a forcing term, and the domain {$\Omega$} is defined by the
following inequalities (see \cite{Babin-1992, Celebi-Kalant-Polat,
  Polat}):
\begin{equation} \label{eq:channel-dom} b_1(x_1)\leq x_2 \leq b_2
  (x_1),\,\, x_1\in \R,
\end{equation}
where $b_1$ and $b_2$ are twice continuously differentiable functions
bounded over the entire $x_1$-axis according to
\begin{gather*}
  -M \leq b_1(x_1)\leq  x_2 \leq  b_2(x_1) \leq M,\,\, x_1\in \R,\\
  | b'_i(x_1) + b_i(x_1) |\leq c, \,\, i=1,2.
\end{gather*}

In order to formally derive system \eqref{eq:Bardina-visc}, we
consider the 2D Navier--Stokes equations in the space periodic setting
$\Omega=\T^2$ (although it would be sufficient to consider
  periodicity only in the $x_1$-direction), i.e.
{\begin{equation} \label{eq:NS-2D-a} \left.\begin{array}{ll}
        \partial_t \mathbf{u}+\nabla\cdot (\mathbf{u}\otimes \mathbf{u}) -
        \nu\Delta \mathbf{u} +\nabla\pi= \mathbf{f}(x,t)  \, , & x\in \Omega \textrm{ and } t \in \R^+,\\
        \nabla\cdot \mathbf{u}= 0 \, , &  x\in \Omega \textrm{ and } t \in \R^+,\\
        \mathbf{u}\vert_{t=0} = \mathbf{u}_0\, , & x\in \Omega,\\ 
      \end{array} \right.
  \end{equation}
  where} $\mathbf{u} (x,t)=(u_1, u_2)$ is the velocity field,
$\pi(x,t)$ denotes the pressure, $\mathbf{f} (x,t) = (f_1, f_2)$ is
the external force, and $\nu > 0$ the kinematic viscosity.

First, we rewrite the Navier--Stokes equations \eqref{eq:NS-2D-a} in
terms of the vorticity $\xi := \curl{\mathbf{u}} := \partial_1
u_2-\partial_2 u_1\in \R$ and then we introduce the stream function
$\omega$ associated to the velocity field {$\mathbf{u}$, i.e. a scalar
  function $\omega\in\R$ such that $\ve u=\curl\omega = (\D_2\omega,
  -\D_1\omega)\in \R^2$} (notice that $\xi=-\Delta\omega$), to
get
\begin{equation} \label{eq:Bard-visc} \left. \begin{array}{l} \Delta
      \D_t \omega +B(\omega, \omega) - \nu \Delta^2 \omega = g,\\
      \omega\vert_{t=0} = \omega_0,
    \end{array}\right.
\end{equation}
where the bilinear operator $B$ is as above (i.e.  $B(\omega, \omega)=
\D_2 \omega \D_1 \Delta\omega - \D_1\omega \D_2\Delta \omega$) and $g=
\D_2f_1 -\D_1 f_2$.

For a function $w$, we introduce the horizontal filter (related to the
horizontal Helmholtz operator), given by
\begin{equation} \label{eq:h-op} A_h = I -\alpha^2\D_1^2, \,\,\,
  \textrm{and}\,\,\, \overline{w}^{{}_h} := A_h^{-1}w.
\end{equation}
As discussed in \cite{Ali, Germano, LayLew-a}, from the point of view
of the numerical simulations, this filter is less memory consuming
with respect to the standard one. Further, another interesting feature
of this filter is that, even in the case of domains which are not
periodic in the vertical direction, there is no need to introduce
artificial boundary conditions for the Helmholtz operator (see,
  e.g., \cite{Ali, Berselli, BerCat, BerCat-2,
    BisCat}). 

We set $v\eqdef\overline{\omega}^{{}_h}$ (and $(v_1,
v_2)=\mathbf{v}:=\overline{\mathbf{u}}^{{}_h}$, with $v_1 = \D_2 v$,
$v_2 = -\D_1 v$) and solve the interior closure problem by using the
approximations
\begin{equation*}
  \overline{B(\omega, \omega)}^{{}_h} \approx
  \overline{B(\overline{\omega}^{{}_h},
    \overline{\omega}^{{}_h})}^{{}_h} =:
  \overline{B(v, v)}^{{}_h},
\end{equation*}
to get the following initial value problem:
\begin{equation*}
  \left. \begin{array}{l}
      \Delta    \D_t  v + \overline{B(v, v)}^{{}_h} - \nu  \Delta^2 v =    \overline{g}^{{}_h},\\
      v \vert_{t=0} = \overline{\omega_0}^{{}_h}.
    \end{array}\right.
\end{equation*}
{By applying the operator $A_h= I -\alpha^2\D_1^2$ to the above system,}
term by term, and considering the {obtained equations} on the
channel-like domain described by \eqref{eq:channel-dom} {(introducing suitable boundary conditions), we get}
\eqref{eq:Bardina-visc}.  Here and in the sequel, for simplicity, we
{always} assume that $g(x,t)=g(x)$.

Set the following anisotropic Sobolev spaces:
\begin{align*}
  &H^{2,h}_0 =\{ f\in W^{1,2}(\Omega)\, \colon \, \D_1 \nabla f \in
  L^2(\Omega), \nabla\cdot f =0 \textrm{ and }
  f\vert_{\D\Omega}=0\},\\
  \intertext{and} &H^{3,h} =\{ f\in W^{2,2}(\Omega)\, \colon \, \D_1
  \Delta f \in
  L^2(\Omega)\}. 
\end{align*}
As first step in our analysis we provide an existence theorem to
\eqref{eq:Bardina-visc} in standard anisotropic Sobolev spaces. In
  this case we deal with a proper class of weak solutions to the
  considered problem (see Definition~\ref{def:weak-sol} below).  This
result reads as follows.
 
  \begin{theorem}\label{thm:existence-no-w}
    Let $v_0\in H^{3,h}\cap H^{2,h}_0$ and $g \in L^2$.  Then,
    there exists a unique weak solution $v$ of the problem
    \eqref{eq:Bardina-visc}.
  \end{theorem}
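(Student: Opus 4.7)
My plan is to construct the solution by a Galerkin scheme, obtain two layers of a priori estimates that exploit the structure of $B$ and of the operator $A_h=I-\alpha^2\partial_1^2$, and then pass to the limit and establish uniqueness by an energy argument on the difference of two solutions. I fix a countable set $\{w_j\}$ of smooth functions that is total in $H^{3,h}\cap H^{2,h}_0$ and compatible with the boundary conditions in \eqref{eq:Bardina-visc}, and look for approximations $v_N(t)=\sum_{j=1}^N c_j^N(t)w_j$ solving the ODE system obtained by projecting \eqref{eq:Bardina-visc} onto $\mathrm{span}\{w_1,\dots,w_N\}$, with datum $P_N v_0$. Local existence for the ODE is standard; the estimates below will then provide global existence for $v_N$.

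\textbf{A priori estimates.} The two key algebraic facts are that, under the BC in \eqref{eq:Bardina-visc}, the operator $-A_h\Delta$ is positive self-adjoint (so $A_h$ and $\Delta$ may be freely integrated by parts and commuted), and that $(B(v,v),v)=0=(B(v,v),\Delta v)$ via the antisymmetry $(J(a,b),c)=-(J(a,c),b)$ of the Jacobian $J(a,b)=\partial_2 a\,\partial_1 b-\partial_1 a\,\partial_2 b$. Testing the Galerkin equation against $v_N$ yields, after integration by parts,
\begin{equation*}
\tfrac{1}{2}\tfrac{d}{dt}\bigl(\|\nabla v_N\|^2+\alpha^2\|\partial_1\nabla v_N\|^2\bigr)+\nu\bigl(\|\Delta v_N\|^2+\alpha^2\|\partial_1\Delta v_N\|^2\bigr)=(g,v_N),
\end{equation*}
which, combined with the Poincaré inequality available in the strip (thanks to the Dirichlet condition $v=0$ on $\partial\Omega$) and Young's inequality, gives a uniform bound of $v_N$ in $L^\infty(0,T;H^{2,h}_0)\cap L^2(0,T;H^{3,h})$. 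Testing next with $\Delta v_N$, and again exploiting the orthogonality of $B(v_N,v_N)$, produces an identical-looking estimate at the next level and hence a uniform bound of $v_N$ in $L^\infty(0,T;H^{3,h})$; this uses that $v_0\in H^{3,h}$.

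\textbf{Compactness and passage to the limit.} From these uniform bounds I extract a subsequence with $v_N\rightharpoonup v$ weakly-$*$ in $L^\infty(0,T;H^{3,h}\cap H^{2,h}_0)$. To handle the nonlinearity on the unbounded domain, I exhaust $\Omega$ by bounded subdomains $\Omega_R=\Omega\cap\{|x_1|<R\}$, use the equation to derive a uniform bound on $\partial_t v_N$ in a suitable negative-order space, and apply the Aubin--Lions lemma on each $\Omega_R$ to gain strong convergence in $L^2(0,T;H^2(\Omega_R))$. Combined with a diagonal extraction over $R\uparrow\infty$, this is enough to pass to the limit in $B(v_N,v_N)$ tested against compactly supported functions, yielding that $v$ is a weak solution in the sense of Definition~\ref{def:weak-sol}.

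\textbf{Uniqueness and main obstacle.} For uniqueness, the difference $w=v^1-v^2$ of two weak solutions satisfies
\begin{equation*}
A_h\Delta\partial_t w-\nu A_h\Delta^2 w=-B(w,v^1)-B(v^2,w).
\end{equation*}
Testing against $w$ and invoking the identity $(B(a,b),c)=-(B(a,c),b)$, the bilinear remainder reduces to terms of the form $(B(w,w),v^i)$, which I would estimate by 2D Ladyzhenskaya / Gagliardo--Nirenberg inequalities, controlling the factors involving $v^i$ through the already established $H^{3,h}$ regularity; a Gronwall argument then forces $w\equiv 0$. The step I expect to be the most delicate is making the integration by parts leading to the a priori estimates fully rigorous in the unbounded anisotropic setting of \eqref{eq:channel-dom}---in particular ruling out boundary contributions at the graphs $x_2=b_i(x_1)$ where $b_i'\neq 0$, and justifying the commutation between $A_h$ and $\Delta$---which requires a careful choice of the Galerkin basis adapted to the channel geometry and to the three boundary conditions $v=\nabla v=\partial_1\nabla v=0$ on $\partial\Omega$.
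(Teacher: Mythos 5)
Your overall skeleton (Galerkin, two levels of energy estimates, localized compactness, energy uniqueness) matches the paper's, and your first estimate (testing against $v_N$) is exactly the paper's. The genuine divergence, and the gap, is in the second-level estimate. The paper obtains the $H^{3,h}$ bound by testing against $v_t$, not against $\Delta v_N$. This choice is not cosmetic: (i) $v_t^N=\sum_j \dot c_j^N w_j$ automatically lies in the Galerkin space $\mathrm{span}\{w_1,\dots,w_N\}$, whereas $\Delta v_N$ does not for any natural basis compatible with the clamped conditions $v=\nabla v=\partial_1\nabla v=0$ (there is no spectral basis for which $\Delta$ maps $H_N$ into itself here), so your second test function is not admissible at the approximate level; (ii) when testing against $v_t$ every integration by parts is clean because $v_t$, $\nabla v_t$ and $\partial_1\nabla v_t$ all vanish on $\partial\Omega$, whereas testing against $\Delta v$ the viscous term produces
\begin{equation*}
-\nu\int_\Omega \Delta^2 v\,\Delta v\,dx=\nu\|\nabla\Delta v\|^2-\nu\int_{\partial\Omega}\partial_n(\Delta v)\,\Delta v\,dS,
\end{equation*}
and the boundary integral has no sign and does not vanish, since $\Delta v$ is not prescribed on $\partial\Omega$ under the stated boundary conditions. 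So the ``identical-looking estimate at the next level'' does not materialize: your formal orthogonality $(B(v,v),\Delta v)=0$ is correct for smooth fields with these boundary conditions, but the dissipative structure you need alongside it is lost. The paper instead accepts that $(B(v,v),v_t)\neq 0$ and estimates it by H\"older, Gagliardo--Nirenberg and Young, closing with Gr\"onwall using the level-one bound $v\in L^2_{\rm loc}(H^{2})$ already established.

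A second, related omission: Definition~\ref{def:weak-sol} requires $v_t\in L^2_{\rm loc}(\mathbb{R};H^{2,h}_0)$, a positive-order space. The paper's $v_t$-test delivers exactly this (the terms $\|\nabla v_t\|^2+\alpha^2\|\partial_1\nabla v_t\|^2$ appear on the good side of the inequality), whereas your plan only produces a bound on $\partial_t v_N$ in a negative-order space for the Aubin--Lions step, which is not enough to verify the definition of weak solution. The remaining parts of your proposal (localization on $\Omega_R$ plus diagonal extraction for compactness, and the standard Gr\"onwall argument for uniqueness via the antisymmetry of $B$) are consistent with what the paper does or leaves to the reader. To repair the proposal, replace the $\Delta v_N$ test by the $v_t^N$ test and estimate the nonlinearity rather than trying to cancel it.
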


  In the main theorem of the paper we prove that the global weak
  solution $v=v(t)$ in Theorem~\ref{thm:existence-no-w} is actually
  defined in a suitable class of anisotropic weighted Sobolev spaces
  (see Theorem~\ref{thm:weight-existence} below).

  In proving Theorem~\ref{thm:weight-existence} we do not follow
  directly the scheme behind the standard Aubin--Lions lemma,
  rather we use a different compactness method (see
    \cite[Corollary~2.34]{Simon}, see also Lemma~\ref{utility-lemma}
    below) by which we perform our analysis on approximating open
    bounded subsets $\pazocal{O}$ of $\Omega$. This result allows us
    to surmount the difficulties due to the boundary conditions and
    the unboundedness of the considered domain $\Omega$.

  The results obtained here open the way to the analysis of
  dynamics in terms of attractors; this will be the matter of a
  forthcoming paper. \smallskip

  \noindent \textbf{Plan of the paper}. In
  Section~\ref{sec:preliminary} we introduce the main notation and we
  also give some preliminary results. In Section~\ref{sec:weak-sol} we
  give the precise definition of weak solution, we state our main
  result (Theorem~\ref{thm:weight-existence}) and we also present some
  remarks on the existence of weak solutions.
  {Section~\ref{sec:no-weight} is devoted to the proof of
    Theorem~\ref{thm:existence-no-w}.}  In
  Section~\ref{sec:weight-result}, we study
  problem~\eqref{eq:Bardina-visc} in suitable Sobolev weighted spaces
  proving Theorem~\ref{thm:weight-existence}. Finally, the
    appendix is dedicated to the properties of the weight functions
    used to define the weighted Sobolev spaces used in
    Theorem~\ref{thm:weight-existence}.

  \section{Notation and preliminary
      results} \label{sec:preliminary} In what follows, we denote by
  $L^p:=L^p(\Omega)$, and $W^{k,p}:=W^{k,p}(\Omega)$, with
  $H^{k}:=W^{2,k}$, $k,p\in \N$, the usual Lebesgue and Sobolev
  spaces, respectively.  Also, we denote by $(\, ,\, $) and $\| \cdot
  \|$ the standard $L^2$-inner product and norm in $L^2(\Omega)$,
  respectively. We denote by $(H^k)'$ the dual space to $H^{k}$,
  $k,p\in \N$, and this notation will be adapted in a straightforward
  manner, when it makes sense, to the further spaces that will be
  introduced in the sequel.

  Given a Banach space $X$, for $p\in [1, \infty)$, we denote the
  usual Bochner spaces $L^p(0, T; X)$ with associated norm
  $\|f\|^p_{L^p(0, T; X)}:=\int^T_0 \|f(s)\|^p_X ds$ (the lower bound
  of $\|f(s)\|_X$ if $p=\infty$), with $\|\cdot \|_X$ the norm of $X$.

  Hereafter, $C$ will denote a dimensionless constant which might
  depend on the shape of the domain $\Omega$ and that may assume
  different values, even in the same line.

  Let us introduce the following function spaces:
  \begin{align*}
    & H:= \{ f \in L^2(\Omega) \, \colon\, \nabla \cdot f =0 \textrm{
      and } f
    =0 \textrm{ on } \partial\Omega\}, \\
    & H^{1,h} := \{ f \in L^2(\Omega)  \, \colon\,  \D_1 f \in  L^2(\Omega) \}, \\
    & H^{2,h} := \{f \in H^{1,h} \, \colon\, \D_1 \nabla f \in
    L^2(\Omega) \} ,\\
    & H^{3,h} := \{f \in H^{2,h} \, \colon\, \D_1\Delta f \in
    L^2(\Omega) \},
  \end{align*}
  and $H^{l,h}_{0}:=H^{l,h}\cap H$, $l=1,2,3$.

  \subsection{Weighted Sobolev spaces: Basic properties and
      related inequalities} \label{ssec:weight} Here, we consider a
  family of functions $\varphi_\rho(x, \alpha_1, \alpha_2, \epsilon,
  \gamma)=\varphi(x, \alpha_1, \alpha_2, \epsilon, \rho, \gamma)$
  enjoying the following properties, analogue to those listed in
  {\cite[\S 2.2, (A), pg. 383 ]{Celebi-Kalant-Polat} (see also
    \cite{Babin-1992})}:
  \begin{align*}
    & \varphi\geq 1, \quad \varphi(x,\alpha_1,\alpha_2,\epsilon,\rho,\gamma)=\varphi(\epsilon x,\alpha_1,\alpha_2,1,\rho,1)^\gamma, \\
    & \varphi(x,\alpha_1,\alpha_2,1,\rho,\gamma) \text{ does not depend on } \rho \text{ if } |x|\leq\rho, \\
    & \varphi(x,\alpha_1,\alpha_2,1,\rho,\gamma) = \varphi(\rho +1,\alpha_1,\alpha_2,1,\rho,\gamma) \text{ as } |x|\geq\rho +1 \\
    & \varphi(x,\alpha_1,\alpha_2,\epsilon,\rho_1,\gamma)\geq \varphi(x,\alpha_1,\alpha_2,\epsilon,\rho_2,\gamma) \text{ for } \rho_1\geq\rho_2\geq 1, \gamma\geq 0, \\
    & \underset{\rho \to +\infty}{\lim} \varphi(x, \alpha_1, \alpha_2,
    1, \rho, \gamma) = (1 + |
    x_1|^{\alpha_1}+|x_2|^{\alpha_2})^{\frac{\gamma}{2}}=: \phi(x,
    \alpha_1, \alpha_2, 1, \gamma/2),
  \end{align*}
  where we have introduced the anisotropic weight function $\phi (x,
  \alpha_1, \alpha_2, \epsilon, \gamma) = (1 + |\epsilon
  x_1|^{\alpha_1}+|\epsilon x_2|^{\alpha_2})^{\gamma}$ (see, e.g.,
  \cite{Babin-1992, EfZe}), which is suitable for the anisotropic
  Sobolev spaces we will consider in the sequel (see the definition of
  $H^{l,h}_\gamma$, $l=1,2,3$, below).

  For the remainder of the paper we always assume $\alpha_1=3$,
  $\alpha_2=2$ and we use the compact notations $\phi:= \phi (x,
  \epsilon, \gamma) := \phi (x, 3, 2, \epsilon, \gamma)$ and $\varphi
  :=\varphi (x, \epsilon, \rho, \gamma): = \varphi (x, 3, 2, \epsilon,
  \rho, \gamma)$.

  Again, arguing as in \cite{Babin-1992, Celebi-Kalant-Polat}, we take
  $\psi := \varphi^{1/2}$. Notice that we can choose $\varphi$ so that
  \begin{equation} \label{eq:weight-reg} |\D^\beta \psi^2|\leq
    C\epsilon^{|\beta|}\psi^2 \textrm{ for every multi-index }
    \beta=(\beta_1,\beta_2),\,\, |\beta|\leq 3 \textrm{ and }
    \beta_2\leq 2.
  \end{equation}
  This property will play a crucial role in the subsequent
    computations.  \smallskip

  We denote by $H^l_\gamma:=H^l_\gamma(\Omega)$ the space of functions
  equipped with the following norm:
  \begin{equation*}
     \|v\|_{l,\gamma}^2:=\sum_{|\beta|\leq l}\|\D^\beta v\|_\gamma^2,
  \end{equation*}
  where
  \begin{equation*}
      \|v\|_\gamma^2:=\int_\Omega |v|^2 (1 + |x_1|^3 + |x_2|^2)^\gamma dx,
    \,\,\gamma >0\,\, \textrm{ and } \,\, \D^\beta :=\frac{\D^{\beta_1
        +\beta_2}}{\D x_1^{\beta_1}\D x_2^{\beta_2}}, \,\, (\beta_1, \beta_2)\in \N^2.
  \end{equation*}


  Using the above notation, we introduce the further spaces
  \begin{equation} \label{eq:aws-spaces}
    \begin{aligned}
      & H_\gamma := H_{0, \gamma} := \{ f\in H\, \colon\, \| f \|_\gamma <+\infty\},\\
      & H^{1,h}_\gamma := \{ f \in H \, \colon\, \|\D_1
      f\|_\gamma <+\infty \}, \\
      & H^{2,h}_\gamma := \{f \in H^{1,h}\cap H \, \colon\, \|\D_1 \nabla f \|_\gamma <+\infty \} ,\\
      & H^{3,h}_\gamma := \{f \in H^{2,h}\cap H \, \colon\,
      \|\D_1\Delta f \|_\gamma <+\infty \}.
    \end{aligned}
  \end{equation}

  Let us recall the following results taken from \cite{Babin-1992}
  (see also \cite{Celebi-Kalant-Polat}).
  \begin{proposition}
    If $v\in H_0^1(\Omega)$, then
    \begin{equation}
      \big| \| \psi \nabla v \| - \| \nabla (\psi v)\|\big|\leq C \epsilon
      \|\psi v\|.
    \end{equation}
    If $v \in H^2_0$, then it holds true that
    \begin{equation}
      \big| \| \psi \Delta v \| - \| \Delta (\psi v)\|\big|\leq C \epsilon
      \|\psi |v| + \psi |\nabla v| \|.
    \end{equation}
  \end{proposition}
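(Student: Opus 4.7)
The plan is to expand the derivatives via the Leibniz rule, isolate the terms that involve derivatives of the weight, and then bound those weight-derivative terms using \eqref{eq:weight-reg}. The reverse triangle inequality $\bigl|\|a\|-\|b\|\bigr|\leq\|a-b\|$ reduces both statements to controlling $L^2$ norms of $v$ (and $\nabla v$) multiplied by derivatives of $\psi$.

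First I would establish the key pointwise bounds on the derivatives of $\psi=\varphi^{1/2}$ from the given bounds on the derivatives of $\psi^2=\varphi$. Differentiating $\psi^2$ gives $\nabla\psi^2=2\psi\nabla\psi$, so
\begin{equation*}
|\nabla\psi|=\frac{|\nabla\psi^2|}{2\psi}\leq\frac{C\epsilon\,\psi^2}{2\psi}=C\epsilon\,\psi,
\end{equation*}
using $\psi\geq 1$. Similarly, from $\Delta\psi^2=2\psi\Delta\psi+2|\nabla\psi|^2$,
\begin{equation*}
|\Delta\psi|\leq\frac{|\Delta\psi^2|+2|\nabla\psi|^2}{2\psi}\leq\frac{C\epsilon^2\psi^2+2C^2\epsilon^2\psi^2}{2\psi}\leq C'\epsilon^2\,\psi,
\end{equation*}
where \eqref{eq:weight-reg} is applied with $|\beta|\leq 2$ and $\beta_2\leq 2$ (so both inequalities are available).

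For the first inequality, the Leibniz rule gives $\nabla(\psi v)=\psi\nabla v+v\nabla\psi$, so by the reverse triangle inequality,
\begin{equation*}
\bigl|\|\nabla(\psi v)\|-\|\psi\nabla v\|\bigr|\leq\|v\nabla\psi\|\leq C\epsilon\,\|\psi v\|,
\end{equation*}
which is the claim. For the second, expanding $\Delta(\psi v)=\psi\Delta v+2\nabla\psi\cdot\nabla v+v\Delta\psi$ yields
\begin{equation*}
\bigl|\|\Delta(\psi v)\|-\|\psi\Delta v\|\bigr|\leq\|2\nabla\psi\cdot\nabla v+v\Delta\psi\|\leq 2C\epsilon\,\|\psi|\nabla v|\|+C'\epsilon^2\,\|\psi|v|\|,
\end{equation*}
and, since we may assume $\epsilon\leq 1$ (the interesting regime for these spaces), the right-hand side is bounded by $C\epsilon\,\|\psi|v|+\psi|\nabla v|\|$, proving the second inequality.

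The only delicate point is deriving the bounds on $|\nabla\psi|$ and $|\Delta\psi|$ from those of $\psi^2$: this is the price paid for working with $\psi=\varphi^{1/2}$ rather than with $\varphi$ itself, but the fact that $\psi\geq 1$ makes the division by $\psi$ harmless and the whole argument essentially algebraic. No compactness or spectral input is needed; the computation is pointwise, followed by integration against $v^2$ or $|\nabla v|^2$.
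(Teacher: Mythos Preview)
Your proof is correct. The paper does not actually supply a proof of this proposition; it merely recalls the result from \cite{Babin-1992} (see also \cite{Celebi-Kalant-Polat}). Your argument---deriving pointwise bounds $|\nabla\psi|\leq C\epsilon\psi$ and $|\Delta\psi|\leq C\epsilon^2\psi$ from \eqref{eq:weight-reg} via the identities $\nabla\psi^2=2\psi\nabla\psi$ and $\Delta\psi^2=2\psi\Delta\psi+2|\nabla\psi|^2$, then applying the Leibniz rule and the reverse triangle inequality---is exactly the standard route one finds in those references, and your implicit assumption $\epsilon\leq 1$ is consistent with the paper's own use of ``$\epsilon$ sufficiently small'' elsewhere.
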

  Next, we have a weighted version of the classical Poincar\'e
  inequality.
  \begin{proposition}
    Let $v\in H^1_0$ with $v\vert_{\D\Omega}=0$. Then it holds true
    that
    \begin{equation}
      \|\psi v\| \leq 2 \lambda_1^{-1}\|\psi \nabla v\|.
    \end{equation}
    Let $\epsilon$ in the definition of $\varphi$ be sufficiently
    small. Let $v\in H^2\cap H^1_0$. Then
    \begin{equation}
      \|\psi \nabla v\| \leq 2 \lambda_1^{-\frac{1}{2}}\|\psi \Delta v\|.
    \end{equation}
  \end{proposition}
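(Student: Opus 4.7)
The plan is to reduce each weighted inequality to the corresponding unweighted Poincaré-type inequality applied to the product $\psi v$, and then control the commutator error terms using the previous proposition together with the smallness of $\epsilon$ from \eqref{eq:weight-reg}.

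For the first estimate, I would start by noting that $\psi v$ vanishes on $\partial\Omega$ since $v$ does (and $\psi$ is a smooth bounded-away-from-zero weight), so the classical Poincaré inequality on the strip-like domain $\Omega$ yields $\|\psi v\| \leq \lambda_1^{-1}\|\nabla(\psi v)\|$, with $\lambda_1$ the first Dirichlet eigenvalue. The previous proposition gives $\|\nabla(\psi v)\| \leq \|\psi\nabla v\| + C\epsilon\|\psi v\|$. Combining these and absorbing the term $C\epsilon\lambda_1^{-1}\|\psi v\|$ on the left-hand side — which is possible provided $\epsilon$ is chosen small enough that $C\epsilon\lambda_1^{-1} \leq 1/2$ — yields $\|\psi v\| \leq 2\lambda_1^{-1}\|\psi\nabla v\|$.

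For the second estimate, the strategy is analogous but iterated. First, I would recall the standard $H^2$-Poincaré bound $\|\nabla w\| \leq \lambda_1^{-1/2}\|\Delta w\|$ valid for $w \in H^2\cap H^1_0$ (which follows from integration by parts, Cauchy--Schwarz, and the first Poincaré inequality). Applying this to $w=\psi v$ and using the second commutator estimate of the preceding proposition, I get
\begin{equation*}
\|\nabla(\psi v)\| \leq \lambda_1^{-1/2}\|\Delta(\psi v)\| \leq \lambda_1^{-1/2}\bigl(\|\psi\Delta v\| + C\epsilon\|\psi|v|+\psi|\nabla v|\|\bigr).
\end{equation*}
Then, using $\|\psi\nabla v\| \leq \|\nabla(\psi v)\| + C\epsilon\|\psi v\|$ on the left and invoking the first part of the proposition to replace $\|\psi v\|$ by $2\lambda_1^{-1}\|\psi\nabla v\|$, I obtain an inequality of the form $\|\psi\nabla v\| \leq \lambda_1^{-1/2}\|\psi\Delta v\| + C'\epsilon\|\psi\nabla v\|$. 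Absorbing the last term into the left side, which requires $\epsilon$ sufficiently small (as the hypothesis explicitly states), gives the desired bound $\|\psi\nabla v\|\leq 2\lambda_1^{-1/2}\|\psi\Delta v\|$.

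The main obstacle is purely bookkeeping: one must carefully track the $\epsilon$-dependent commutator terms coming from differentiating $\psi$, and make sure that the accumulated constant $C'$ remains independent of $v$ so that the smallness condition on $\epsilon$ is uniform. The key structural input making the absorption work is precisely property \eqref{eq:weight-reg}, which ensures that every derivative of $\psi^2$ picks up a factor of $\epsilon$; without this, the coupling between $\|\psi v\|$, $\|\psi\nabla v\|$, and $\|\psi\Delta v\|$ could not be closed.
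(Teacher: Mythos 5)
Your argument is sound in substance, but note that the paper states this proposition without proof, importing it from \cite{Babin-1992} (see also \cite{Celebi-Kalant-Polat}), so the comparison is with the reference's argument rather than with anything written in the text. Your route for the second inequality --- apply the unweighted bound $\|\nabla w\|\le\lambda_1^{-1/2}\|\Delta w\|$ to $w=\psi v$, control $\|\Delta(\psi v)\|-\|\psi\Delta v\|$ and $\|\nabla(\psi v)\|-\|\psi\nabla v\|$ by the commutator proposition, convert $\|\psi v\|$ and $\|\psi|v|+\psi|\nabla v|\|$ into multiples of $\|\psi\nabla v\|$ via the first part, and absorb --- is exactly the intended mechanism, and the smallness of $\epsilon$ is explicitly hypothesized there, so that part closes. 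The one point to flag is the first inequality: as stated it carries \emph{no} smallness assumption on $\epsilon$, whereas your absorption of $C\epsilon\lambda_1^{-1}\|\psi v\|$ into the left-hand side requires $C\epsilon\lambda_1^{-1}\le 1/2$. The usual proof in this strip-like setting avoids the commutator altogether: since $b_1(x_1)\le x_2\le b_2(x_1)$ with $|x_2|\le M$ and $v$ vanishes on $\partial\Omega$, one writes $v(x_1,x_2)=\int_{b_1(x_1)}^{x_2}\partial_2 v(x_1,y_2)\,dy_2$, applies Cauchy--Schwarz on each vertical fiber, and observes that the weight $\psi^2(x_1,\cdot)$ is comparable to a constant on each fiber (its $x_2$-contribution $|\epsilon x_2|^2\le\epsilon^2M^2$ is uniformly bounded); this yields the weighted Poincar\'e inequality with a constant depending only on the width of the strip, for every $\epsilon$, and with no circularity feeding into part two. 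Within this paper the distinction is harmless because $\epsilon$ is taken small throughout, but to prove the statement exactly as written you should use the fiberwise argument for the first estimate. (The differing powers $\lambda_1^{-1}$ versus $\lambda_1^{-1/2}$ in the two displayed inequalities are not mutually consistent under a single normalization of the first eigenvalue; the paper is itself loose on this point, so your constant bookkeeping is no worse than the source's.)
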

  \begin{proposition}
    It holds true that
    \begin{align}
      & \|v\|_{1,\gamma}\leq \|\psi \nabla v\|\,\, \forall v\in
      H^1_\gamma,\,\, v\vert_{\D \Omega}=0,
      \intertext{and} & \|v\|_{2,\gamma}\leq \|\psi \Delta
      v\|\,\, \forall u\in H^2_\gamma,\,\, v\vert_{\D \Omega}=0.
    \end{align}
  \end{proposition}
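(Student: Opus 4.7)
The plan is to reduce both inequalities to classical (unweighted) statements applied to the function $\psi v$, which inherits the zero-trace condition from $v$ since $\psi$ is smooth and strictly positive. For the first inequality, I would start from the definition $\|v\|_{1,\gamma}^2 = \|v\|_\gamma^2 + \|\nabla v\|_\gamma^2$. Since the weight $(1+|x_1|^3+|x_2|^2)^\gamma$ appearing in $\|\cdot\|_\gamma$ is controlled, up to the normalization encoded in $\varphi$, by $\psi^2$, each summand on the right is bounded by the corresponding $\psi$-weighted $L^2$-norm, giving $\|v\|_{1,\gamma}^2 \leq C(\|\psi v\|^2 + \|\psi \nabla v\|^2)$. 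Applying then the weighted Poincar\'e inequality $\|\psi v\| \leq 2\lambda_1^{-1}\|\psi \nabla v\|$ from the preceding proposition yields the first assertion.

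For the second inequality I would decompose $\|v\|_{2,\gamma}^2 = \|v\|_\gamma^2 + \|\nabla v\|_\gamma^2 + \sum_{|\beta|=2}\|\D^\beta v\|_\gamma^2$ and treat the lower-order parts exactly as in the first step. For the Hessian part I would apply the classical elliptic identity $\|D^2 w\|^2 = \|\Delta w\|^2$ (valid for $w \in H^2 \cap H_0^1$ on the strip by integration by parts, thanks to the $C^2$-regularity and boundedness of $b_1,b_2$) to $w = \psi v$; the commutator estimate for $\Delta$ from the first proposition above then lets me replace $\|\Delta(\psi v)\|$ by $\|\psi \Delta v\|$ modulo an error of order $\epsilon(\|\psi v\| + \|\psi \nabla v\|)$. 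A parallel Leibniz computation, controlled pointwise by \eqref{eq:weight-reg}, allows me to replace $\|D^2(\psi v)\|$ by the $\psi$-weighted Hessian up to analogous $O(\epsilon)$ commutators. Chaining the two weighted Poincar\'e inequalities $\|\psi v\| \lesssim \|\psi \nabla v\| \lesssim \|\psi \Delta v\|$ then absorbs every lower-order contribution into $\|\psi \Delta v\|$.

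The main technical obstacle I anticipate is the careful bookkeeping of the commutator terms produced whenever a derivative falls on $\psi$ rather than on $v$: each such term carries a factor $\D^\beta \psi$, which by \eqref{eq:weight-reg} is pointwise bounded by $\epsilon^{|\beta|}\psi$. For $\epsilon$ sufficiently small (depending only on $\lambda_1$ and on the geometric data of $\Omega$) every such error can be absorbed into the leading weighted norm on the right-hand side; however, the absorption must be organised consistently so that the constants match those stated in the proposition. A secondary, purely technical point is verifying that the classical $H^2$-regularity estimate $\|D^2 w\| \leq C\|\Delta w\|$ is available on the unbounded strip $\Omega$ for functions $w \in H^2 \cap H_0^1$; this follows from the uniform $C^2$-smoothness of the graphs $b_1, b_2$ via a standard localisation argument.
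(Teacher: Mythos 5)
The paper does not actually prove this proposition: it is recalled verbatim from \cite{Babin-1992} (see also \cite{Celebi-Kalant-Polat}), so there is no in-paper argument to compare against. Your strategy --- pass to $w=\psi v$, use the commutator estimates of the first proposition together with the weighted Poincar\'e inequalities, and absorb the $O(\epsilon)$ errors for $\epsilon$ small --- is precisely the route taken in those references, and the overall architecture is sound.

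Two points need repair. First, the ``classical elliptic identity'' $\|D^2w\|^2=\|\Delta w\|^2$ is \emph{false} for $w\in H^2\cap H^1_0$ on a domain with curved boundary: integrating $\int|D^2w|^2$ by parts twice produces boundary integrals involving the curvature of $\partial\Omega$ (only for $w\in H^2_0$, or on convex domains, do they vanish or have a favourable sign). What survives is the inequality $\|D^2w\|\leq C\|\Delta w\|$ with $C$ depending on the uniform $C^{1,1}$ bounds on $b_1,b_2$, which you do invoke in your closing paragraph; but then the conclusion necessarily carries a constant $C$, whereas the proposition is printed without one --- this is best read as a typographical abuse of the paper (its companion propositions all display explicit constants), not something your argument can avoid. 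Second, and more substantively, your reduction ``each $\|\cdot\|_\gamma$ summand is bounded by the corresponding $\psi$-weighted norm'' is not a pointwise domination: by construction $\varphi(x,\epsilon,\rho,\gamma)$ \emph{increases} to $\phi=(1+|x_1|^3+|x_2|^2)^\gamma$ as $\rho\to+\infty$, so $\psi^2=\varphi\leq\phi$ and the ratio $\phi/\varphi$ is unbounded for fixed $\rho$. The correct bookkeeping is to prove the whole chain of inequalities with the truncated weight $\psi$ on \emph{both} sides, with constants uniform in $\rho$ and $\epsilon$, and only then send $\rho\to+\infty$, recovering $\|\cdot\|_{l,\gamma}$ on the left by monotone convergence. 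Your phrase ``up to the normalization encoded in $\varphi$'' glosses over exactly this step, which is the raison d'\^etre of the family $\varphi_\rho$ in the Babin framework; once it is made explicit, the proof closes.
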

  We also have the following controls in the $L^4$-norm.
  \begin{proposition} Let $v\in H^1_\gamma$, with
    $v\vert_{\D\Omega}=0$. Then
    \begin{equation} \label{eq:l4-pesata}
      \begin{aligned}
        \|\psi v \|_{L^4}\leq C\|\nabla (\psi v)\| &\leq C \|\psi
        \nabla v\| +
        C\|v\nabla \psi\|\\
        & \leq C \|\psi \nabla v\| + C\epsilon\|v \psi\|.
      \end{aligned}
    \end{equation}
    Further, if $v\in H^2_\gamma$, with $v\vert_{\D\Omega}=0$, then
    \begin{equation}
      \|\psi \nabla v \|_{L^4}\leq C\epsilon \|\psi \nabla v\| + C\|\psi
      \Delta v\|. 
    \end{equation}
  \end{proposition}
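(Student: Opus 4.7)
The plan is to handle the two inequalities separately, sharing the common ingredient that $|\nabla\psi|\leq C\epsilon\psi$ pointwise. This follows from property \eqref{eq:weight-reg} applied with $|\beta|=1$: since $|\nabla\psi^{2}|=2\psi|\nabla\psi|\leq C\epsilon\psi^{2}$ and $\psi\geq 1$, one divides to get the claim. I will use this at every step where $\nabla\psi$ appears.

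For the first chain, on $v\in H^{1}_\gamma$ with $v|_{\partial\Omega}=0$, I observe that $\psi v$ also vanishes on $\partial\Omega$, so $\psi v\in H^{1}_{0}(\Omega)$. The two-dimensional Sobolev embedding $H^{1}_{0}(\Omega)\hookrightarrow L^{4}(\Omega)$ (equivalently the 2D Ladyzhenskaya inequality combined with the Poincar\'e inequality already recalled above) yields the first inequality $\|\psi v\|_{L^{4}}\leq C\|\nabla(\psi v)\|$. The Leibniz rule $\nabla(\psi v)=\psi\nabla v+v\nabla\psi$ and the triangle inequality give the middle inequality, and the pointwise bound $|\nabla\psi|\leq C\epsilon\psi$ applied to $\|v\nabla\psi\|$ closes the chain with $\|v\nabla\psi\|\leq C\epsilon\|\psi v\|$.

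For the second inequality, with $v\in H^{2}_\gamma$ and $v|_{\partial\Omega}=0$, the function $\psi\nabla v$ no longer vanishes on $\partial\Omega$ in general, so I would apply a 2D Gagliardo--Nirenberg/Ladyzhenskaya-type estimate in the form $\|w\|_{L^{4}}^{2}\leq C\|w\|\bigl(\|w\|+\|\nabla w\|\bigr)$ to $w=\psi\nabla v$. Expanding $\nabla(\psi\nabla v)=\nabla\psi\otimes\nabla v+\psi\nabla^{2}v$ and using once more $|\nabla\psi|\leq C\epsilon\psi$ produces the error term $C\epsilon\|\psi\nabla v\|$, so it remains to bound $\|\psi\nabla^{2}v\|$ by $\|\psi\Delta v\|$ up to lower-order $\epsilon$-pieces. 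For this last step I would prove a weighted version of the classical identity $\int|\nabla^{2}v|^{2}\,dx=\int|\Delta v|^{2}\,dx+\text{b.t.}$ valid on $H^{2}\cap H^{1}_{0}$: integrate by parts twice in $\int_{\Omega}\psi^{2}|\nabla^{2}v|^{2}\,dx$, move the extra derivatives that hit $\psi^{2}$ using \eqref{eq:weight-reg}, and absorb the boundary terms through the smoothness and boundedness of $b_{1},b_{2}$. A Young inequality then combines the resulting pieces into $C\epsilon\|\psi\nabla v\|+C\|\psi\Delta v\|$.

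The main obstacle is precisely this weighted elliptic regularity bound $\|\psi\nabla^{2}v\|\leq C(\|\psi\Delta v\|+\epsilon\|\psi\nabla v\|)$, since it mixes the anisotropic weight $\psi$ with the curved, unbounded boundary of $\Omega$. The boundary contributions arising in the integration by parts are the delicate point and must be controlled by the hypothesis $|b_{i}'+b_{i}|\leq c$ on the functions defining the strip, in the spirit of \cite{Babin-1992, Celebi-Kalant-Polat}.
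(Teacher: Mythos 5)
The paper itself offers no proof of this proposition: it is recalled verbatim from \cite{Babin-1992} and \cite{Celebi-Kalant-Polat}, so there is nothing to compare line by line. Your route is the standard one from those references, and the first chain of inequalities is complete and correct: $\psi v\in H^1_0(\Omega)$, the strip has finite width so Poincar\'e plus the 2D Ladyzhenskaya inequality give $\|\psi v\|_{L^4}\le C\|\nabla(\psi v)\|$, and the Leibniz rule together with $|\nabla\psi|\le C\epsilon\psi$ (correctly extracted from \eqref{eq:weight-reg} using $\psi\ge1$) closes it.

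Two remarks on the second inequality. First, a small but real gap: the Gagliardo--Nirenberg step $\|w\|_{L^4}^2\le C\|w\|(\|w\|+\|\nabla w\|)$ with $w=\psi\nabla v$ produces, after Young, a term $C\|\psi\nabla v\|$ with a constant of order one, \emph{not} $C\epsilon\|\psi\nabla v\|$ as claimed; the factor $\epsilon$ does not appear from your expansion, since the term $\|w\|\cdot\|w\|$ carries no small parameter. The fix is to invoke the weighted Poincar\'e inequality already recalled in the paper, $\|\psi\nabla v\|\le 2\lambda_1^{-1/2}\|\psi\Delta v\|$ (valid for $\epsilon$ small), which absorbs $C\|\psi\nabla v\|$ into $C\|\psi\Delta v\|$ and makes the stated bound trivially follow; you should say this explicitly rather than attribute the conclusion to Young's inequality alone. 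Second, you are making unnecessary work for yourself with the weighted elliptic estimate $\|\psi\nabla^2v\|\le C(\|\psi\Delta v\|+\epsilon\|\psi\nabla v\|)$: since $\varphi\le\phi^\gamma$, this is already contained in the recalled inequality $\|v\|_{2,\gamma}\le\|\psi\Delta v\|$ for $v\in H^2_\gamma$ with $v\vert_{\partial\Omega}=0$, so the delicate boundary-term analysis on the curved strip that you flag as the main obstacle is exactly the content of a proposition the paper has already placed at your disposal; re-deriving it is legitimate but redundant within this toolkit.
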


  \section{Weak solutions and existence results} \label{sec:weak-sol}
  Consider the simplified-Bardina model \eqref{eq:Bardina-visc}.
  Observe that the bilinear form
  \begin{equation*}
    B(u,v)\eqdef \, \D_2 v \D_1 \Delta u - \D_1 v \D_2 \Delta u =
    \D_1(\D_2v \Delta u)-\D_2(\D_1 v \Delta u)
  \end{equation*}
  is such that
  \begin{align*}
    \bigl(B(u,v),w\bigr) = & \int \D_1(\D_2v \Delta u)w-\D_2(\D_1 v
    \Delta u)w
    =\int \D_1 v \Delta u \D_2 w - \D_2 v \Delta u \D_1 w\\
    = & -\int \D_1 w \Delta u \D_2 v - \D_2 w \Delta u \D_1 v =  -\bigl(B(u,w),v\bigr),\\
    \intertext{and} \bigl(B(u,v),v\bigr) = \, & 0,
  \end{align*}
  where the second line is obtained integrating by parts and
  exploiting the boundary conditions.  Here and in the sequel,
    unless stated otherwise, we drop the $dx$ in the space-integrals
    to keep the notation as compact as possible.  \smallskip

  We now give the following definition.

  \begin{defin} \label{def:weak-sol} Given $v_0\in H^{3,h}\cap
      H^{2,h}_0$ and $g \in L^2(\Omega)$, we say that $v\in
    L^\infty_{\rm loc}(\R; H^{2,h}_0\cap H^{3,h})$ is a weak solution
    of \eqref{eq:Bardina-visc} if $v_t\in L^2_{\mathrm{loc}}(\R;
    H^{2,h}_0)$ and
    \begin{align*}
      & (\nabla v_t,\nabla\mathpzc{h})+\alpha^2(\D_1\nabla v_t,
      \D_1\nabla\mathpzc{h})
      +\nu(\Delta v, \Delta\mathpzc{h}) +\nu\alpha^2(\D_1\Delta v, \D_1\Delta\mathpzc{h}) \\
      & \qquad = \big( B(v,v), \mathpzc{h} \big)-(g,\mathpzc{h})
    \end{align*}
    for every $\mathpzc{h}\in H^{2,h}_0\cap H^{3,h}(\Omega)$, for
    a.e. $t\in \R$ (and the initial datum is assumed in weak sense).
  \end{defin}

  In the next section we give a proof of
  Theorem~\ref{thm:existence-no-w} that guarantees existence and
  uniqueness of a weak solution to problem~\eqref{eq:Bardina-visc}.

   The anisotropic weighted Sobolev spaces introduced in
    \eqref{eq:aws-spaces} provide the appropriate functional framework
    for studying the existence of weak solutions to
    \eqref{eq:Bardina-visc} enjoying extra regularity
    properties. Then, in Section~\ref{sec:weight-result} we prove our
    main result, that reads as follows.

  {\begin{theorem} \label{thm:weight-existence} Let $g \in H_{0,
      \gamma}$. Then, for any $v_0 \in
      H^{3,h}_\gamma\cap H^{2, h}_0$ and $T>\tau$ given, 
    the weak solution $v$ of \eqref{eq:Bardina-visc} provided by
    Theorem~\ref{thm:existence-no-w} is such that $v\in
    L^\infty(\tau, T; H^{2, h}_{ \gamma})\cap
    L^2(\tau, T; H^{3, h}_{ \gamma})\cap C(\tau, T;
    H^{1,h}_{ \gamma})$ and $v_t \in L^2(\tau, T;
    H^{1,h}_{ \gamma})$. 
  \end{theorem}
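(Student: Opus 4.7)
The strategy is to perform weighted energy estimates directly on the weak solution $v$ provided by Theorem~\ref{thm:existence-no-w}, using as test function $h=\psi^{2}v$, where $\psi=\varphi^{1/2}$ is the cut-off weight with parameter $\rho\geq 1$ that is smooth and \emph{bounded} on $\Omega$ for each fixed $\rho$, and converges pointwise as $\rho\to+\infty$ to the anisotropic weight $(1+|x_1|^3+|x_2|^2)^{\gamma/2}$ defining $\|\cdot\|_\gamma$. Since $v\in L^\infty(H^{2,h}_0\cap H^{3,h})$ and $\psi^2$ is smooth and bounded, $\psi^{2}v\in H^{2,h}_0\cap H^{3,h}$ is admissible in Definition~\ref{def:weak-sol} for every finite $\rho$. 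Property \eqref{eq:weight-reg} will guarantee that every commutator between $\psi$ and a differentiation carries a factor of $\epsilon$, so that by choosing $\epsilon$ small enough they can be absorbed into the dissipative terms. Estimates uniform in $\rho$, followed by Fatou's lemma, will then deliver the weighted bounds.

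With $h=\psi^{2}v$ the four linear terms in the weak formulation produce, modulo commutators,
\begin{equation*}
\tfrac{1}{2}\tfrac{d}{dt}\bigl[\|\psi\nabla v\|^{2}+\alpha^{2}\|\psi\,\partial_{1}\nabla v\|^{2}\bigr]+\nu\|\psi\Delta v\|^{2}+\nu\alpha^{2}\|\psi\,\partial_{1}\Delta v\|^{2}.
\end{equation*}
The commutators, obtained by moving $\psi^{2}$ across derivatives, involve $\partial^{\beta}\psi^{2}$ with $|\beta|\leq 2$ times lower-order derivatives of $v$; thanks to \eqref{eq:weight-reg} and to the weighted Poincar\'e inequalities of Section~\ref{sec:preliminary}, they are all bounded by $C\epsilon\bigl(\|\psi\nabla v\|^{2}+\|\psi\,\partial_{1}\nabla v\|^{2}+\|\psi\Delta v\|^{2}+\|\psi\,\partial_{1}\Delta v\|^{2}\bigr)$, and thus absorbable on the left for $\epsilon$ small.

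The delicate step is the bilinear term $(B(v,v),\psi^{2}v)$. Writing $B(v,v)=\partial_{1}(\partial_{2}v\,\Delta v)-\partial_{2}(\partial_{1}v\,\Delta v)$ and integrating by parts, the principal contribution vanishes by the same antisymmetry that yields $(B(v,v),v)=0$, so that only the commutator with $\psi^{2}$ survives:
\begin{equation*}
(B(v,v),\psi^{2}v)=\int\bigl(\partial_{1}v\,\partial_{2}\psi^{2}-\partial_{2}v\,\partial_{1}\psi^{2}\bigr)\Delta v\,v\,dx.
\end{equation*}
Invoking $|\partial_{i}\psi^{2}|\leq C\epsilon\psi^{2}$ together with a $L^{2}$-$L^{4}$-$L^{4}$ H\"older splitting and the weighted $L^{4}$ inequality \eqref{eq:l4-pesata}, and exploiting the a priori bound on $\|v\|_{L^{\infty}(0,T;H^{1})}$ coming from Theorem~\ref{thm:existence-no-w}, this piece is controlled by $C\epsilon\|\psi\Delta v\|\bigl(\|\psi\nabla v\|+\|\psi\Delta v\|\bigr)$. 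The forcing is handled by $|(g,\psi^{2}v)|\leq\|g\|_{\gamma}\|\psi v\|\leq C\|g\|_{\gamma}\|\psi\nabla v\|$. Choosing $\epsilon$ so small that all error terms can be absorbed into $\nu\|\psi\Delta v\|^{2}+\nu\alpha^{2}\|\psi\,\partial_{1}\Delta v\|^{2}$, Gronwall's inequality delivers bounds on $\|\psi\nabla v\|,\|\psi\,\partial_{1}\nabla v\|$ in $L^{\infty}(\tau,T)$ and on $\|\psi\Delta v\|,\|\psi\,\partial_{1}\Delta v\|$ in $L^{2}(\tau,T)$, uniformly in $\rho$; letting $\rho\to+\infty$ and applying Fatou yields $v\in L^{\infty}(\tau,T;H^{2,h}_{\gamma})\cap L^{2}(\tau,T;H^{3,h}_{\gamma})$.

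The bound on $v_{t}$ is then extracted from the equation itself: pairing $(I-\alpha^{2}\partial_{1}^{2})\Delta v_{t}=\nu(I-\alpha^{2}\partial_{1}^{2})\Delta^{2}v-B(v,v)+g$ with suitable weighted test functions and using the just-established bounds on $v$ gives $v_{t}\in L^{2}(\tau,T;H^{1,h}_{\gamma})$. Finally, the continuity $v\in C(\tau,T;H^{1,h}_{\gamma})$ follows from the Simon-type compactness Lemma~\ref{utility-lemma} applied on the approximating bounded subsets $\mathcal{O}\subset\Omega$, combining uniform weighted energy estimates with local compactness in place of the Aubin--Lions lemma. The two main obstacles I expect are: (i) carrying out the cancellation in the nonlinear term in the weighted setting in such a way that the surviving pieces genuinely carry a factor of $\epsilon$, which requires careful bookkeeping of the commutators between $\psi^{2}$ and the derivatives in $B$; and (ii) justifying the limit $\rho\to+\infty$ consistently with the boundary conditions on the unbounded strip, which is precisely the reason for passing through the bounded domains $\mathcal{O}$ and the local compactness Lemma~\ref{utility-lemma}.
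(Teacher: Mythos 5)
Your core energy estimate --- testing with $\psi^{2}v$, observing that the antisymmetry of $B$ kills the principal part of the nonlinearity so that only $\epsilon$-weighted commutators with $\D^{\beta}\psi^{2}$ survive, and absorbing everything via \eqref{eq:weight-reg} and the weighted Poincar\'e inequalities --- is exactly the computation the paper performs in its STEP~1 (compare your commutator identity for $(B(v,v),\psi^{2}v)$ with \eqref{eq:nonlinear-t}). Where you genuinely diverge is in the rigorous justification. The paper re-runs a Galerkin scheme, proves the weighted bounds on $v^{m}$, and then invests four further steps (time-translation estimates, Lemma~\ref{utility-lemma} on bounded subdomains $\pazocal{O}$, identification of the limit, passage to the limit in the nonlinear term) to show that the weighted bounds survive in the limit and that the limit is a weak solution. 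You instead exploit the fact that Theorem~\ref{thm:existence-no-w} already furnishes the unique weak solution with $v_t\in L^2_{\mathrm{loc}}(H^{2,h}_0)$, insert the admissible test function $\psi^{2}v$ (legitimate for each fixed $\rho$, since $\varphi$ is constant outside a compact set and the boundary conditions are preserved), obtain bounds uniform in $\rho$, and let $\rho\to+\infty$ by monotone convergence. This is a leaner route: it bypasses the compactness machinery entirely, precisely because uniqueness means you are upgrading the regularity of an existing object rather than constructing a new one. What the paper's heavier approach buys is independence from the a priori time-regularity of $v_t$ (everything is done at the level of finite-dimensional ODEs), whereas your route must justify the chain rule $\frac{d}{dt}\|\psi\nabla v\|^{2}=2(\psi\nabla v_t,\psi\nabla v)$ and the use of a time-dependent test function; given $v_t\in L^2_{\mathrm{loc}}(H^{2,h}_0)$ this is a routine Lions--Magenes argument, but it should be stated.

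There is, however, one step that does not work as you describe: the continuity $v\in C(\tau,T;H^{1,h}_{\gamma})$ cannot ``follow from the Simon-type compactness Lemma~\ref{utility-lemma}.'' That lemma yields precompactness of a \emph{family} in $L^{p}(\Theta;E)$; it says nothing about continuity in time of a single trajectory, and in your scheme there is no approximating family left to which it could be applied. The paper obtains continuity by interpolation: from $v,\D_1 v\in L^{2}(0,T;H^{1}_{\gamma})$ and $v_t,\D_1 v_t\in L^{2}(0,T;(H^{1}_{\gamma})')$ (the latter via the unweighted bound of Theorem~\ref{thm:existence-no-w} and the chain of embeddings $H^{1}_{\gamma}\subset H^{1}\subset(H^{1}_{\gamma})'$) one concludes $v,\D_1 v\in C([0,T];L^{2}_{\gamma})$. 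In your setup the cleanest fix is to first make rigorous your claimed bound $v_t\in L^{2}(\tau,T;H^{1,h}_{\gamma})$ --- which is only asserted, and requires testing the equation for $v_t$ against $\psi^{2}v_t$ and reabsorbing the resulting commutators using the already-established weighted bounds on $v$ --- after which $v\in C([\tau,T];H^{1,h}_{\gamma})$ follows from $v,v_t\in L^{2}(\tau,T;H^{1,h}_{\gamma})$ by absolute continuity, with no compactness needed.
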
}

{\begin{corol} Under the hypotheses of 
  Theorem~\ref{thm:weight-existence} it holds true that $v\in
    L^\infty_{\mathrm{loc}}(0,\infty; H^{2, h}_{ \gamma})\cap
    L^2_{\mathrm{loc}}(0,\infty; H^{3, h}_{ \gamma})\cap C(0, \infty;
    H^{1,h}_{ \gamma})$ and $v_t \in L^2_{\mathrm{loc}}(0,\infty;
    H^{1,h}_{ \gamma})$.
  \end{corol}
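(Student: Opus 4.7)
The plan is to deduce this corollary directly from Theorem~\ref{thm:weight-existence} by an exhaustion argument, using that the solution whose regularity is being asserted is the \emph{unique} weak solution provided by Theorem~\ref{thm:existence-no-w}. Thus there is a single object $v$ on which we apply the weighted estimates of Theorem~\ref{thm:weight-existence} with different choices of $\tau$ and $T$.

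Concretely, first I would fix sequences $\tau_n \searrow 0$ and $T_n \nearrow +\infty$ (say $\tau_n = 1/n$, $T_n = n$, for $n$ large). Since the hypotheses of Theorem~\ref{thm:weight-existence} are satisfied for every pair $(\tau_n, T_n)$, each application yields
\begin{equation*}
  v \in L^\infty(\tau_n, T_n; H^{2,h}_\gamma) \cap L^2(\tau_n, T_n; H^{3,h}_\gamma) \cap C(\tau_n, T_n; H^{1,h}_\gamma), \qquad v_t \in L^2(\tau_n, T_n; H^{1,h}_\gamma).
\end{equation*}
Now given any compact interval $[a,b] \subset (0, +\infty)$, for $n$ large enough one has $[a,b] \subset [\tau_n, T_n]$, so the bounds restrict to $[a,b]$. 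This is exactly the definition of the local-in-time Bochner spaces $L^\infty_{\mathrm{loc}}$ and $L^2_{\mathrm{loc}}$ on $(0, +\infty)$, yielding the stated memberships for $v$ and $v_t$.

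For the continuity statement $v \in C(0, +\infty; H^{1,h}_\gamma)$, I would argue pointwise: for each $t_0 \in (0, +\infty)$, choose $n$ with $\tau_n < t_0 < T_n$; then Theorem~\ref{thm:weight-existence} gives continuity of $v$ on $[\tau_n, T_n]$ with values in $H^{1,h}_\gamma$, hence in particular at $t_0$. Since $t_0$ was arbitrary, $v$ is continuous on all of $(0, +\infty)$ with values in $H^{1,h}_\gamma$.

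There is no real obstacle here: the only point to check is the consistency of invoking Theorem~\ref{thm:weight-existence} on overlapping intervals, and this is guaranteed by the uniqueness part of Theorem~\ref{thm:existence-no-w}, which ensures that the distinct applications produce regularity statements about the same function $v$. Note that no continuity at $t=0$ is claimed (nor could it be, since only $v_0 \in H^{3,h}_\gamma \cap H^{2,h}_0$ is assumed and the regularization into the weighted phase space is instantaneous in $t>0$), which is consistent with the open interval appearing in the corollary.
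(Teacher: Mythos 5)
Your proof is correct and is precisely the routine exhaustion argument on which the paper implicitly relies: the corollary is stated without proof as an immediate consequence of Theorem~\ref{thm:weight-existence} applied with $\tau_n \searrow 0$ and $T_n \nearrow +\infty$, the uniqueness from Theorem~\ref{thm:existence-no-w} guaranteeing that all these applications concern the same function $v$. The only minor quibble is your closing parenthetical: STEP 6 of the paper's proof of Theorem~\ref{thm:weight-existence} actually establishes $v \in C([0,T]; H^{1,h}_\gamma)$ \emph{including} $t=0$ (the datum $v_0$ already lies in $H^{3,h}_\gamma \cap H^{2,h}_0 \subset H^{1,h}_\gamma$), so your claim that continuity at $t=0$ could not hold is stronger than warranted, though immaterial to the corollary as stated.
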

  }
  \section{Existence in anisotropic Sobolev spaces}
  \label{sec:no-weight}
  This section is devoted to the proof of
  Theorem~\ref{thm:existence-no-w}, which provides the existence of a
  unique weak solution of the problem~\eqref{eq:Bardina-visc}. Since
  the proof follows standard methods, we proceed formally in order to
  find appropriate a priori estimates. A rigorous proof can be easily
  obtained by introducing a Galerkin approximation and finding similar
  estimates.

  We are now ready to proceed with the proof of
  Theorem~\ref{thm:existence-no-w}.

\begin{proof}[Proof of Theorem~\ref{thm:existence-no-w}]
  Testing formally \eqref{eq:Bardina-visc} against $v$, we get
  \begin{gather}
    \frac{1}{2}\frac{d}{dt} \big(\|\nabla v\|^2 + \alpha^2\|\D_1\nabla
    v\|^2 \big) + \nu \big(\|\Delta v\|^2 + \alpha^2\|\D_1\Delta v\|^2
    \big) \leq |(g,v)|.
  \end{gather}
  Since $|(g,v)|\leq \lambda_1^{-1}\|g\| \cdot \|\Delta v\| \leq
  \frac{1}{2\nu\lambda_1^2}\|g\|^2 + \frac{\nu}{2}\|\Delta v\|^2$, we
  deduce
  \begin{align*}
    \frac{d}{dt} \big(\|\nabla v\|^2 + \alpha^2\|\D_1\nabla v\|^2
    \big) + \nu \big(\|\Delta v\|^2 + \alpha^2\|\D_1\Delta v\|^2 \big)
    \leq \frac{1}{\nu\lambda_1^2}\|g\|^2,
  \end{align*}
  which implies
  \begin{align*}
    & \|\nabla v(t)\|^2 + \alpha^2\|\D_1\nabla v(t)\|^2 + \nu \int_0^t
    \big(\|\Delta v(s)\|^2 + \alpha^2\|\D_1\Delta v(s)\|^2
    \big) ds \\
    & \qquad \leq \frac{1}{\nu\lambda_1^2}\|g\|^2 t + \|\nabla
    v(0)\|^2 + \alpha^2\|\D_1\nabla v(0)\|^2,
  \end{align*}
  so that $v\in L^\infty_{\rm loc}(0, \infty; H^{2,h}_0)\cap
    L^2_{\rm loc}(0, \infty; H^{2,h}_0 \cap H^{3,h})$.
		
  Multiplying \eqref{eq:Bardina-visc} against $v_t$ and
    integrating over $\Omega$, we get
  \begin{equation} 
    \begin{aligned} 
      \frac{\nu}{2} \frac{d}{dt} \big(\|\Delta v\|^2 +
      \alpha^2\|\D_1\Delta v\|^2 \big)    {+
      \|\nabla v_t\|^2}
 & {+ \alpha^2\|\D_1\nabla v_t\|^2 }  \\
      &\leq |\big(g, v_t\big)| + |\big( B(v, v), v_t
      \big)|. \label{eq:stima-ut}
    \end{aligned}
  \end{equation}
  We have $|(g,v_t)|\leq \lambda_1^{-1/2} \|g\| \cdot \| \nabla v_t \|
  \leq \frac{\|g\|^2}{\lambda_1}+\frac{\|\nabla v_t\|^2}{4}$ and,
  thanks to the H\"older, the Gagliardo--Nirenberg and the Young
  inequalities, we have also
  \begin{align*}
    | (B(v,v),v_t) | & \leq  \| \D_1 v\|_{L^\infty} \| \Delta v\| \, \| \D_2 v_t\| + \| \D_2 v\|_{L^4} \| \Delta v\| \, \| \D_1 v_t \|_{L^4} \\
    & \leq   \| \D_1 \Delta v\| \, \| \Delta v\| \, \| \nabla v_t\| + C \| \Delta v \|^2 \| \D_1\nabla v_t \| \\
    & \leq \frac{\|\nabla v_t\|^2}{4} + \frac{\alpha^2}{2}\| \D_1
    \nabla v_t \|^2 + C \nu(\| \Delta v\|^2+\alpha^2 \| \D_1
      \Delta v\|^2)\|\Delta v\|^2,
  \end{align*}
  for a suitable constant $C=C(\lambda_1, \alpha,
  \nu)>0$. {Plugging this estimate in \eqref{eq:stima-ut}}, we obtain
  \begin{align*}
    & \nu \frac{d}{dt} \big(\|\Delta v\|^2 + \alpha^2\|\D_1\Delta
    v\|^2 \big)   +   \|\nabla v_t\|^2 + \alpha^2\|\D_1\nabla v_t\|^2 \\
    &\qquad \leq C \|g\|^2 + C \nu(\| \Delta v\|^2+\alpha^2 \|
      \D_1 \Delta v\|^2)\|\Delta v\|^2.
  \end{align*}
  Since we have already proved 
  {that $v\in L^2_{\rm
    loc} (H^2)$,} an application of the Gr\"onwall lemma gives the
  claimed regularity of $v$ (here we use the full regularity of
  $v_0$), and consequently by the previous inequality, the
  regularity of $v_t$.

  Lastly, notice that the proof of the uniqueness of weak solutions is
  quite standard and very similar to the proof of uniqueness for the
  case of a bounded domain (mainly because of the validity of the
  Poincar\'e inequality) and this last part of the proof is left to
  the reader.
\end{proof}

\section{Weak solutions in anisotropic weighted Sobolev
  spaces} \label{sec:weight-result} In what follows we prove the main
result of the paper, i.e. Theorem~\ref{thm:weight-existence}.  Let us
consider the weigth function $\phi=(1 + |x_1|^3 +|x_2|^2)^\gamma$,
$\gamma>0,$ introduced in Subsection~\ref{ssec:weight}, and the
approximating function $\varphi = \varphi(x, \epsilon, \rho, \gamma)$
with $\psi=\varphi^{\frac{1}{2}}$.

We state the following technical lemma.

\begin{lemma}\label{w-funcs}
  Under the assumption $\gamma\leq 2/3$, setting $\psi =
  \varphi^{1/2}$, then it holds true that
  \begin{equation} \label{control-on-w-funcs} |\D^\beta \psi^2| \leq
    C\epsilon^{|\beta |} \psi,\,\, \qquad \beta=(\beta_1,\beta_2),\,\,
    0< |\beta|\leq 3,\,\, \beta_2 \leq 2.
  \end{equation}
\end{lemma}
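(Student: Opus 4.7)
The plan is to carry out a direct pointwise computation, exploiting the explicit structure of the approximating weight $\varphi$ and the fact that $\Omega$ is a strip, so that $x_{2}$ is bounded. The present lemma strengthens the earlier estimate \eqref{eq:weight-reg} (which carries $\psi^{2}$ on the right-hand side): the improvement to $\psi$ is possible precisely because, at infinity, the growth of $\psi$ absorbs the ``loss'' of one power of $\psi$ produced by the differentiation.

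I would first reduce to $\epsilon=1$ via the scaling identity $\varphi(x,\epsilon,\rho,\gamma)=\tilde\varphi(\epsilon x,\rho)^{\gamma}$ with $\tilde\varphi:=\varphi(\cdot,1,\rho,1)$. Each derivative in $x$ pulls out a factor of $\epsilon$, so after extracting the global $\epsilon^{|\beta|}$ it suffices to show, uniformly in $\rho\geq 1$ and in the rescaled variable $y=\epsilon x$ with $|y_{2}|\leq \epsilon M$, that
\[
|\partial_{y}^{\beta}\tilde\varphi(y)^{\gamma}|\leq C\,\tilde\varphi(y)^{\gamma/2}.
\]
In the saturation region $|y|\geq \rho+1$ the function $\tilde\varphi$ is constant and the bound is trivial; in the active region $|y|\leq\rho$, $\tilde\varphi$ coincides, by the listed properties, with the explicit asymptotic profile $r(y)^{1/2}$ where $r(y):=1+|y_{1}|^{3}+|y_{2}|^{2}$. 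The intermediate transition zone $\rho<|y|<\rho+1$ is handled by the smoothness and the uniform bounds coming from the standard construction of $\varphi$ as in \cite{Babin-1992,Celebi-Kalant-Polat}.

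Next, I would apply the Fa\`a di Bruno formula to $\tilde\varphi^{\gamma}$, writing $\partial^{\beta}\tilde\varphi^{\gamma}$ as a finite linear combination of terms of the form $\tilde\varphi^{\gamma-k}\prod_{j=1}^{k}\partial^{\beta_{j}}\tilde\varphi$ with $\sum_{j}\beta_{j}=\beta$. A direct calculation yields $|\partial_{1}^{m}\tilde\varphi|\lesssim r^{(3-2m)/6}$ for $m\leq 3$, and on the strip $|\partial_{2}^{m}\tilde\varphi|\lesssim r^{-(m-1)/2}$ for $m\leq 2$ (using $|y_{2}|\leq\epsilon M$ in the factor $\partial_2 r = 2y_2$). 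Substituting these bounds into each Fa\`a di Bruno term produces an estimate of the form $r^{\gamma/2-\sigma(\beta)}$, for a nonnegative exponent $\sigma(\beta)$ depending on $\beta$ and on the way derivatives are distributed among the factors.

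The core of the proof, and the main obstacle, is to verify that $\sigma(\beta)\geq \gamma/4$ for every admissible $\beta$, so that each term is bounded by $r^{\gamma/4}\sim\tilde\varphi^{\gamma/2}$, which is exactly the target. The tightest case is $\beta=(1,0)$, where the bound reduces to the pointwise inequality $r^{\gamma/2-1}|y_1|^2\lesssim r^{\gamma/4}$ at large $|y_{1}|$; the hypothesis $\gamma\leq 2/3$ provides a safety margin large enough to accommodate all multi-indices $\beta$ with $|\beta|\leq 3$ and $\beta_{2}\leq 2$ simultaneously, with room to spare for the subdominant transition contributions and to obtain a constant $C$ independent of $\rho$. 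Passing to $\rho\to+\infty$ (or noting that the bound is already uniform in $\rho$ by the listed monotonicity property of $\varphi$) completes the proof.
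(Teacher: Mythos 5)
Your overall strategy -- a direct pointwise computation, differentiating through the composition with the cut-off profile $g$ and using the boundedness of $x_2$ on the strip -- is the same as the paper's, but two of your reductions hide exactly the places where the hypothesis $\gamma\leq 2/3$ actually does its work, and one of them rests on a misidentification of the profile. The appendix constructs $\varphi(x,\epsilon,\rho,\gamma)=\bigl(g\bigl((1+|\epsilon x_1|^3+|\epsilon x_2|^2)^{1/2}\bigr)\bigr)^{2\gamma}$ as in \eqref{weight-funcs}, so in the active region $\varphi(\cdot,1,\rho,1)=g(r^{1/2})^2=r$ with $r=1+|y_1|^3+|y_2|^2$, not $r^{1/2}$ as you claim. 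With the correct profile the target for $\beta=(1,0)$ is $|\D_1 r^{\gamma}|\sim r^{\gamma-1}|y_1|^2\lesssim r^{\gamma-1/3}\leq C\,r^{\gamma/2}$, which holds \emph{if and only if} $\gamma\leq 2/3$ (this is \eqref{derivative}); there is no ``room to spare,'' and your version of the inequality, $r^{\gamma/2-1}|y_1|^2\lesssim r^{\gamma/4}$, only requires $\gamma\leq 4/3$, which signals that your normalization is off by a square. (The mismatch is traceable to the limit property listed in Subsection~\ref{ssec:weight}, which is inconsistent with \eqref{weight-funcs}; the appendix is the operative definition for this lemma.)

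The second and more substantive gap is the transition zone $\rho\leq\phi^{1/2}\leq\rho+1$, which you dismiss as ``handled by the smoothness and the uniform bounds coming from the standard construction.'' This is where the paper's proof spends most of its effort: there $g(\tau)=\rho+1/2-(\rho+1-\tau)^2/2$, the Fa\`a di Bruno terms carry factors $g(\phi)^{2\gamma-k}(g'(\phi))^{j}\cdots$, and the uniformity of the constant in $\rho$ reduces to bounds such as
\begin{equation*}
\frac{(\rho+1-\phi)\,|\epsilon x_1|^{1/2}}{\bigl(\rho+1/2-(\rho+1-\phi)^2/2\bigr)^{1-\gamma}}
\;\leq\;\frac{(1+\rho)^{1/3}}{\rho^{1-\gamma}}\;\leq\;C,
\end{equation*}
as in \eqref{eq:utility-computation-1}, which again requires $1/3\leq 1-\gamma$, i.e.\ $\gamma\leq 2/3$. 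So the $\rho$-uniformity in the intermediate region is not a soft consequence of smoothness; it is a second, independent occurrence of the exponent constraint, and your proof as written does not verify it. To repair the argument, keep your Fa\`a di Bruno bookkeeping but apply it to $g(\phi^{1/2})^{2\gamma}$ with the case distinction of \eqref{function-g}, using $0\leq g'\leq 1$, $|g''|\leq 1$, $g'''\equiv 0$ and $g(\tau)\geq\rho$ on the transition zone, and redo the exponent count with the profile $r$ rather than $r^{1/2}$.
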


The precise construction of the weight function $\varphi$ and the
proof of this lemma are postponed to Appendix~\ref{app:weight}.  For
the remainder of the paper we always assume that $\gamma \leq 2/3$.

\medskip

In the proof of existence in weighted spaces, we will use the
following result (see also \cite[Theorem 2.2]{Garrido} and
  \cite{Ahn}) to overcome the difficulties arising because of the
unboundedness of the strip-like region $\Omega$. 

\begin{lemma}[Corollary 2.34, \cite{Simon}] \label{utility-lemma} Let
  $\Theta$ be a bounded set of $\R^d$, $X \subset E$ Banach spaces
  with compact injection. Consider $1 \leq p < q \leq +\infty$.
  Suppose that $\pazocal{F} \subset L^p(\Theta; E)$ satisfies
  \begin{itemize}
  \item[(i)] $\forall \pazocal{W} \subset\subset \Theta,
    \underset{k\to 0}{\lim} \underset{f \in \pazocal{F}}{\sup}
    \|\tau_kf - f \|_{L^p(\pazocal{W}; E)} = 0$ (where $\tau_k f$ is
    the translation given by
    $\tau_k f (x) = f (x + k))$,\\[-1 em]
  \item[(ii)] $\pazocal{F}$ is bounded in $L^q(\Theta; E) \cap
    L^1(\Theta, X)$.
  \end{itemize}
  Then, $\pazocal{F}$ is precompact in $L^p(\Theta; E)$.
\end{lemma}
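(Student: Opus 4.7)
The plan is to establish precompactness of $\pazocal{F}$ in $L^p(\Theta;E)$ via a regularization/averaging argument in the spirit of the Riesz--Fr\'echet--Kolmogorov theorem, adapted to the vector-valued setting where the compact injection $X\subset E$ provides compactness of the values. The first step is a reduction to a relatively compact subdomain: fix $\epsilon>0$ and choose $\pazocal{W}\subset\subset\Theta$ so that $\|f\|_{L^p(\Theta\setminus\pazocal{W};E)}<\epsilon$ uniformly in $f\in\pazocal{F}$. Such a $\pazocal{W}$ exists because hypothesis (ii) yields boundedness in $L^q(\Theta;E)$ with $q>p$, and H\"older's inequality on the small-measure set $\Theta\setminus\pazocal{W}$ gives uniform equi-integrability in $L^p$. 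It therefore suffices to produce a finite $\epsilon$-net for $\pazocal{F}$ in $L^p(\pazocal{W};E)$.

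For the second step, I would introduce for small $h>0$ the local averaging operator
\begin{equation*}
T_h f(x)\eqdef \frac{1}{|B_h|}\int_{B_h}f(x+y)\,dy,\qquad x\in \pazocal{W},
\end{equation*}
with $B_h$ the ball of radius $h$ in $\R^d$ (take $h$ small so that $x+B_h\subset\Theta$ for all $x\in\pazocal{W}$). Minkowski's integral inequality and hypothesis (i) give
\begin{equation*}
\sup_{f\in\pazocal{F}}\|T_h f-f\|_{L^p(\pazocal{W};E)}\leq \sup_{f\in\pazocal{F}}\sup_{|y|\leq h}\|\tau_y f-f\|_{L^p(\pazocal{W};E)}\longrightarrow 0
\end{equation*}
as $h\to 0^+$, so $T_h\pazocal{F}$ approximates $\pazocal{F}$ uniformly in $L^p(\pazocal{W};E)$. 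The task is thus reduced to showing that for each fixed (small) $h$ the family $T_h\pazocal{F}\eqdef\{T_h f:f\in\pazocal{F}\}$ is precompact in $L^p(\pazocal{W};E)$.

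The precompactness of $T_h\pazocal{F}$ rests on two observations. Pointwise,
\begin{equation*}
\|T_hf(x)\|_X\leq \frac{1}{|B_h|}\int_{B_h}\|f(x+y)\|_X\,dy\leq \frac{\|f\|_{L^1(\Theta;X)}}{|B_h|},
\end{equation*}
so, uniformly in $x\in\pazocal{W}$ and $f\in\pazocal{F}$, the values $T_hf(x)$ lie in a fixed bounded subset of $X$, which is precompact in $E$ by the compact injection $X\subset E$. In the $x$-direction, the commutation $\tau_k(T_hf)=T_h(\tau_k f)$ together with Young's inequality for averages yields
\begin{equation*}
\|\tau_k(T_hf)-T_hf\|_{L^p(\pazocal{W}';E)}\leq \|\tau_kf-f\|_{L^p(\pazocal{W}'';E)}
\end{equation*}
on slightly smaller subdomains $\pazocal{W}'\subset\subset\pazocal{W}''\subset\subset\Theta$, so hypothesis (i) again gives equicontinuity of translations on $T_h\pazocal{F}$, uniformly in $f$. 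Applying the vector-valued Riesz--Fr\'echet--Kolmogorov criterion (values landing in a fixed precompact subset of $E$, plus uniform equicontinuity of translations) produces the desired precompactness.

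Finally, combining $\|T_hf-f\|_{L^p(\pazocal{W};E)}<\epsilon/2$ with a finite $\epsilon/2$-net for $T_h\pazocal{F}$ yields a finite $\epsilon$-net for $\pazocal{F}$ in $L^p(\pazocal{W};E)$, and the tail bound completes the argument in $L^p(\Theta;E)$. The main obstacle I expect is the careful interplay between the three hypotheses: the $L^1(\Theta;X)$ bound combined with the compact injection controls the values of $T_hf(x)$ pointwise after averaging, hypothesis (i) controls their variation in $x$, and the $L^q$ bound with $q>p$ delivers equi-integrability so that the restriction to $\pazocal{W}\subset\subset\Theta$ is harmless. Orchestrating these three ingredients inside the vector-valued Kolmogorov--Riesz criterion is the technical heart of the proof.
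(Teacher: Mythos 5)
The paper does not prove this lemma at all: it is quoted verbatim as Corollary~2.34 of Simon's lecture notes and used as a black box, so there is no internal proof to compare yours against. Judged on its own terms, your argument is a correct and essentially standard proof in the Kolmogorov--Riesz--Fr\'echet/Simon style, and the three ingredients are deployed exactly where they are needed: the $L^q$ bound with $q>p$ plus H\"older gives the uniform tail estimate on $\Theta\setminus\pazocal{W}$, so that working on $\pazocal{W}\subset\subset\Theta$ loses only $\epsilon$; hypothesis (i) plus Minkowski's integral inequality gives $\sup_f\|T_hf-f\|_{L^p(\pazocal{W};E)}\to 0$; and the $L^1(\Theta;X)$ bound forces the averaged values $T_hf(x)$ into a single bounded subset of $X$, hence a fixed compact subset of $E$, while the commutation of $T_h$ with translations transfers the equicontinuity to $T_h\pazocal{F}$. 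Two small points deserve explicit care if you write this up in full: first, the final invocation of a ``vector-valued Riesz--Fr\'echet--Kolmogorov criterion'' is itself usually proved by the further discretization you are implicitly relying on (approximate each $T_hf$ by a piecewise-constant function over a fine partition of $\pazocal{W}$ with values in the fixed compact $K\subset E$, and use that such piecewise-constant families are totally bounded), so either cite a precise vector-valued version or include that half page; second, the averaging $T_hf(x)$ and the translated norms must only ever be evaluated where $x+B_h\subset\Theta$, which you handle by shrinking $h$ relative to $\operatorname{dist}(\pazocal{W},\partial\Theta)$ --- this is fine but should be stated once and for all at the start. With those caveats the proposal is complete and, unlike the paper, self-contained.
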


We are now ready to prove Theorem~\ref{thm:weight-existence}.

\begin{proof}[Proof of Theorem~\ref{thm:weight-existence}]
  Since $H^{3,h}_\gamma\cap H^{2,h}_0$ is separable and the set
  $\pazocal{V}= \{v\in C_0^\infty(\Omega) : \nabla \cdot v=0\}$ is
  dense in $H^{3,h}_\gamma\cap H^{2,h}_0$, there exists a
  sequence of linearly independent elements $\{w_1, w_2,\ldots\}
  \subset \pazocal{V}$ which is complete in $H^{3,h}_\gamma\cap
  H^2_0$.  Denote $H_m := \textrm{span} \{w_j\}_{j=1,\ldots, m}$ and
  consider the projector $P_m(v) = \sum^m_{j=1}(v, w_j)w_j$.  A
  function \[v^m= \sum_{j=1}^m
  a_j^m(t)w_j(x)\] 
  is an $m$-approximate solution of Equation~\eqref{eq:Bardina-visc}
  if
  \begin{align*}
    & (\nabla v_t^m,\nabla w_j)+\alpha^2(\D_1\nabla v_t^m, \D_1\nabla
    w_j)
    +\nu(\Delta v^m, \Delta w_j) +\nu\alpha^2(\D_1\Delta v^m, \D_1\Delta w_j) \\
    & \qquad = \big( B(v^m,v^m), w_j \big)-(g, w_j)
  \end{align*}
  for every $j=1,\ldots,m$. The existence of solutions is guaranteed
  by the Peano theorem.

  We split the proof in a number of steps.
  \begin{enumerate}
  \item We establish a priori estimates for $\{v^m\}$ in the space {$ L^\infty_{\mathrm{loc}}(0,\infty; H^{2, h}_{ \gamma})\cap
    L^2_{\mathrm{loc}}(0,\infty; H^{3, h}_{ \gamma})$.}
  \item We show that $\{v^m\}$ satisfies condition (ii) of
    Lemma~\ref{utility-lemma}: $\pazocal F :=
      \{v^m|_{\pazocal{O}}\}$ is bounded in
    $L^\infty(\tau,T;H^{2,h}_\gamma(\pazocal O))\cap
    L^1(\tau,T;H^{3,h}_\gamma(\pazocal O))$, where $\pazocal O$ is any
    open subset in $\Omega$ and $\Theta := (\tau, T)$.
  \item We show that $\{v^m\}$ satisfies condition (i) of
    Lemma~\ref{utility-lemma}:
    	
    $ \underset{k\to 0}{\lim}\, \underset{m\in N}{\sup} \|\tau_kv^m -
    v^m\|_{L^2(\tau ,T-k; H^{2,h}_{\gamma}(\pazocal{O}))} = 0.  $
  \item We apply Lemma~\ref{utility-lemma} and extract a subsequence
    still denoted by $v^m|_{\pazocal O}$ converging to some {$v$ in
    $L^2(\tau, T; H^{2, h}_\gamma(\pazocal{O}))$.} In particular, we observe that the relations in
      the previous two points are uniform with respect to $\pazocal
      O$.
  \item The limiting function $v$ is a weak solution.
  \item By interpolation, we obtain the time continuity of $v$ with
    values in $H^{1,h}_\gamma(\Omega)$.
  \end{enumerate}

  \smallskip

  \noindent \textbf{STEP 1:} \emph{Establishing a priori estimates in
    $ L^\infty_{\mathrm{loc}}(0,\infty; H^{2, h}_{ \gamma})\cap
    L^2_{\mathrm{loc}}(0,\infty; H^{3, h}_{ \gamma})$.}

  Here, we proceed again formally by dealing with $v$ and the
    equation satisfied by it.  However, the a priori estimates that we
    are about to derive can be rigorously justified. Indeed, a
    rigorous proof uses $v^m$ instead of $v$ and $w_j$ as test
    functions, as it will be made in the second and third steps
    below.
  
  We multiply equation~\eqref{eq:Bardina-visc} by $v\psi^2$ in
  $L^2(\Omega)$ and use integration by parts to get
  \begin{equation} \label{eq:stima} \allowdisplaybreaks
    \begin{aligned}
      & \frac{1}{2}\frac{d}{dt} \Big( \|\psi \nabla v\|^2 + \alpha^2
      \|\psi \D_1\nabla v\|^2 \Big) + \nu \| \psi \Delta v\|^2 +
      \nu\alpha^2\|\psi
      \D_1\Delta v\|^2 \\
      & = \big( B(v,v) -g, v\psi^2\big) -\int \nabla v_t \, v \nabla
      \psi^2 +\alpha^2\int \D_1^2\nabla
      v_t\,  v\nabla\psi^2\\
      &\quad - \alpha^2\int \D_1\nabla v_t \nabla v\D_1 \psi^2
      -\nu\int \Delta v \, v \Delta\psi^2
      - 2\nu\int \Delta v\nabla v \nabla\psi^2 \\
      & \quad -2\nu\alpha^2\int \D_1 \Delta v \D_1\nabla v \nabla
      \psi^2 - \nu\alpha^2\int \D_1\Delta v \D_1 v \Delta \psi^2
      -\nu\alpha^2\int \D_1\Delta v \Delta v  \D_1 \psi^2\\
      &\quad -2\nu\alpha^2\int \D_1\Delta v \nabla v \D_1\nabla\psi^2
      -\nu\alpha^2\int \D_1  \Delta v \, v  \D_1\Delta\psi^2 \\
      & = \big( B(v,v) -g, v \psi^2\big) +\sum_{j=1}^{10} L_j.
    \end{aligned}\!\!\!\!\!
  \end{equation}
  In particular, we have used that 
  \begin{align*}
    &-\int\D_1^2\Delta^2 v\, v \psi^2 = \int \D_1\Delta^2 v \D_1 v
    \psi^2 +
    \int \D_1\Delta^2 v v \D_1 \psi^2 \\
    =& -\int \D_1 \nabla\Delta v \D_1\nabla v \psi^2
    -\int \D_1\nabla\Delta v \D_1 v \nabla \psi^2 \\
    & \quad -\int \D_1\nabla\Delta v \nabla v \D_1 \psi^2
    -\int \D_1\nabla\Delta v v \D_1\nabla\psi^2 \\
    = & \int \D_1 \Delta v \D_1\Delta v \psi^2
    +2\int \D_1 \Delta v \D_1\nabla v \nabla \psi^2 + \int \D_1\Delta v \D_1 v \Delta \psi^2 \\
    & \quad +\int \D_1\Delta v \Delta v \D_1 \psi^2 +2\int \D_1\Delta
    v \nabla v \D_1\nabla\psi^2 +\int \D_1 \Delta v v \D_1\Delta\psi^2
  \end{align*}
  and noticed that all boundary terms are zero.

  Proceeding as in \cite{Celebi-Kalant-Polat}, we immediately have the
  existence of a constant $C=C(\lambda_1,\nu,\alpha)>0$ such that
  \begin{equation} \label{eq:nonlinear-t}
    \begin{aligned}
      \big(B(v,v), v\psi^2 \big) & =\int \big[\D_1 v \Delta v \D_2
      (v\psi^2)
      - \D_2 v \Delta v \D_1 (v\psi^2)\big]\\
      &=\int \big[\D_1 v \Delta v \, v\D_2\psi^2
      - \D_2 v \Delta v \, v\D_1 \psi^2\big]\\
      &\leq 2\|v\|_{L^\infty}\|\psi\Delta v\|\|\psi\nabla v\|\\
      &\leq \epsilon \nu \|\psi\Delta v\|^2 + C\epsilon
      \|\psi\nabla v\|^2
    \end{aligned}
  \end{equation}
  and
  \begin{align*}
    \big|\big(g, v\psi^2 \big)\big|& \leq C \| \psi g\|^2 +
    \epsilon\nu \| \psi \Delta v\|^2,
  \end{align*}
  where to control $v$ in $L^\infty$-norm we use Agmon's
    inequality and the regularity provided by
    Theorem~\ref{thm:existence-no-w}.

  Then, we estimate the terms $L_i$, $i=1, \ldots, 10$. Let us start
  with $L_1$, to get
  \begin{equation} \label{eq:L1}
    \begin{aligned}
      |L_1 | &\leq \left| \int \nabla v_t \, v \nabla
        \psi^2 \right|\\
      &\leq \left| \int v_t \nabla v \nabla \psi^2 \right| + \left|
        \int v_t v
        \Delta\psi^2 \right| \\
      & \leq C\epsilon \int | \psi \nabla v | |v_t | +
      C\epsilon^2\int |\psi v| |v_t|\\
      & \leq C\epsilon (\|\psi \nabla v\|^2 + \|v_t\|^2)
    \end{aligned}
  \end{equation}
  where we used the relation \eqref{control-on-w-funcs}.  Similarly,
  we also have that
  \begin{align*}
    |L_2
    | 
    \leq & \alpha^2 \left|\int \D_1 \nabla v_t \D_1v \nabla
      \psi^2\right| + \alpha^2 \left| \int \D_1 \nabla v_t v\D_1
      \nabla \psi^2\right|
    \\
    \leq & \alpha^2\left| \int \D_1 v_t \D_1 \nabla v \nabla
      \psi^2\right| +
    \alpha^2\left| \int \D_1 v_t \D_1 v \Delta \psi^2\right|\\
    &+ \alpha^2\left| \int \D_1 v_t \nabla v \D_1 \nabla \psi^2\right|
    + \alpha^2 \left|\int \D_1 v_t v \D_1 \Delta
      \psi^2\right| \\
    \leq & \epsilon \alpha^2C \| \D_1 v_t\|^2 + \epsilon C\alpha^2
    (\|\psi \D_1 \nabla v\|^2 + \|\psi \nabla v \|^2 ) .
  \end{align*}
  Also in this case we conclude by using \eqref{control-on-w-funcs}.
  For the term $L_3$ we have
  \begin{align*}
    |L_3|&\leq \alpha^2 \left| \int \D_1 \nabla v_t
      \nabla v \D_1\psi^2\right|  \\
    &\leq \alpha^2 \left| \int \D_1 v_t \Delta v \D_1\psi^2\right| +
    \left| \int \D_1 v_t \nabla v \D_1 \nabla\psi^2\right| \\
    &\leq \epsilon \alpha^2C \|\D_1 v_t\|^2 + \epsilon \alpha^2C (\nu
    \|\psi \Delta v\|^2 + \|\psi \nabla v\|^2).
  \end{align*}
  Next, for the terms $L_4$ and $L_5$, using the same inequalities we
  get
  \begin{align*} |L_4| \leq & 2\nu \int |\Delta v | |v | |\Delta
    \psi^2| \leq 2\nu \epsilon^2 \int |\psi\Delta v | |\psi v |
    \\
    \leq & \epsilon^2\nu \|\psi \Delta\|^2 +
    C\epsilon^2\|\psi v\|^2,
  \end{align*}
  and
  \begin{align*} |L_5|\leq &2\nu \left|\int \nabla v \Delta  v \nabla\psi^2\right| \\
    \leq & \epsilon\nu \|\psi \Delta v\|^2 + \epsilon\nu C
    \| \psi\nabla v\|^2.
  \end{align*}
  Again, for the terms $L_6$, $L_7$ and $L_8$ we have
  \begin{align*} |L_6| \leq & \nu \alpha^2 \left| \int \D_1 \nabla v
      \D_1 \Delta v \nabla \psi^2 \right| +
    \nu \alpha^2 \left|\int \D_1 \nabla v \D_1 \nabla v \Delta \psi^2 \right| \\
    \leq & \epsilon\nu\alpha^2\| \psi\D_1\Delta v\|^2 +
    C\epsilon \|\psi \D_1 \nabla v\|^2,
  \end{align*}

  \begin{align*}
    |L_7| \leq & \nu\alpha^2 \left| \int \D_1\Delta v\D_1 v \Delta
      \psi^2 \right| \leq \nu\alpha^2 \epsilon^2 \int |\psi \D_1\Delta
    v | |\psi \D_1
    v|\\
    \leq & \epsilon^2\nu\alpha^2 \|\psi \D_1 \Delta\psi \|^2
    + C\epsilon^2 \alpha^2 \nu\|\psi \nabla v\|^2
  \end{align*}
  and
  \begin{align*}
    |L_8| \leq & \nu\alpha^2 \left|\int \D_1\Delta v \Delta v \D_1
      \psi^2\right| \leq \nu\alpha^2\epsilon \int |\psi \D_1\Delta v |
    |\psi \Delta
    v| \\
    \leq &\frac{\nu \alpha^2\epsilon}{2} \|\psi \D_1\Delta v\|^2 +
    \frac{\nu\alpha^2\epsilon}{2}\|\psi\Delta v\|^2.
  \end{align*}

  Finally, for the last two terms, exploiting similar
  estimates, 
  we get
  \begin{align*}
    |L_9| \leq & \nu\alpha^2 \epsilon^2 \int |\psi \D_1 \Delta v |
    |\psi \nabla v|
    \\
    \leq & \epsilon\nu\alpha^2 \| \psi\D_1\Delta v\|^2 +
    C\epsilon^2 \psi \|\nabla v\|^2
  \end{align*}
  and
  \begin{equation} \label{eq:L10}
    \begin{aligned}
      |L_{10}| \leq & \nu\alpha^2 \int |\D_1 \Delta v| |v| |\D_1
      \Delta \psi^2| \leq \nu\alpha^3\epsilon^2 \int |\psi \D_1 \nabla
      \Delta v|
      |\psi v| \\
      \leq & \epsilon^3\nu\alpha^2\|\psi\D_1\Delta v\|^2 +
      C\epsilon^3\|\psi v\|^2.
    \end{aligned}
  \end{equation}
  Using \eqref{eq:stima} along with the estimates
  \eqref{eq:nonlinear-t}--\eqref{eq:L10} we get
  \begin{equation*} \label{eq:stima-non-chiusa}
    \begin{aligned}
      \frac{1}{2}\frac{d}{dt} \Big( &\|\psi \nabla v\|^2 + \alpha^2
      \|\psi \D_1\nabla v\|^2 \Big) + \nu \| \psi \Delta v\|^2 +
      \nu\alpha^2\|\psi
      \D_1\Delta v\|^2 \\
      \leq & \epsilon C \| v_t\|^2 + \epsilon \alpha^2C\|\D_1v_t\|^2
      + \epsilon\nu C\|\psi\Delta v\|^2\\
      & + \epsilon\nu\alpha^2 C \|\psi\D_1\Delta v\|^2 + \epsilon C\|\psi\nabla
      v\|^2 + C\|\psi g\|^2.
    \end{aligned}
  \end{equation*}
  Using the control on $\|v_t\|$ and $\|\D_1 v_t\|$ provided by
  Theorem~\ref{thm:existence-no-w} together with the Gr\"onwall
  inequality, we get the claimed regularity on $v$, i.e. $v\in
  L^\infty_{\mathrm{loc}}(0,\infty; H^{2, h}_{ \gamma})\cap
  L^2_{\mathrm{loc}}(0,\infty; H^{3, h}_{ \gamma})$. {This concludes
  STEP 1.}
\smallskip

{Before proceeding with the next steps, 
we open a parenthesis to outline the scheme behind the remaining part
of the proof.}
  Until now, we have used $v$ in place of $v^m$ for a matter of
    convenience; however, in view of extracting a convergent
    subsequence of $\{v^m\}$, here below we will employ this latter
    notation.  From the above estimates, we can extract a subsequence of $\{v^m\}$,
  still denoted by $\{v^m\}$, such that
  \begin{align*}
    &v^m \rightharpoonup \tilde v \textrm{ weak-star  in } L^\infty(\tau , T ; H^{2,h}_\gamma(\Omega)),\\
    &v^m \rightharpoonup \tilde v \textrm{ weak in } L^2(\tau , T ;
    H^{3,h}_\gamma(\Omega)).
  \end{align*}
  Moreover, as a consequence of the estimates in the proof of
  Theorem~\ref{thm:existence-no-w} we also have that
  \begin{equation} \label{strong} v^m \to \tilde v \textrm{ strong in
    } L^2(\tau , T ; H^{2,h}(\Omega)).
  \end{equation}
  To conclude our argument, {obtaining that $\{v^m\}$ is relatively
  compact in $L^2(\tau, T; H^{2, h}_\gamma(\Omega))$, we would
need some control on $d v^m/dt$.

When it is possible
to choose a special basis $w_j\in C_0^\infty(\Omega)$ to generate
 the Galerkin elements $v^m(x, t) =
  \sum_{j=1}^m a_j^m(t)w_j(x)$, $m\in \N$, such that a uniform control on
  $\|dv^m/dt\|_{L^2(\tau,T; H^{1,h}_\gamma)}(\Omega)$ holds true, this
  is enough to use a compactness result \`a la Aubin--Lions to get the
  existence of a subsequence such that $v^m \to \tilde v$ in
  $L^2(\tau, T ;H^{2,h}_\gamma(\Omega))$, and even more.

  Here, using Lemma~\ref{utility-lemma}, we obtain a similar result but not on the whole domain
  $\Omega$. Actually, what we are going to prove is the following:
  for any bounded open set $\pazocal{O} \subset \Omega$, there
  exists a subsequence of $\{v^m\}$ (depending on $\pazocal{O}$
  and relabeled $\{v^m|_{\pazocal{O}} \}$) satisfying
  \begin{equation} \label{local-c} v^m|_{\pazocal{O}} \to
    v|_{\pazocal{O}} \textrm{ in } L^2(\tau, T ;
    H^{1,h}_\gamma(\pazocal{O})).
  \end{equation}

  Since we also have that $\{v^m\}$ is weakly convergent to $\tilde v$
  in $L^2(\tau, T; H^{3,h}_\gamma)$, due to the uniqueness of the
  limit it follows that $(\tilde v)|_{\pazocal{O}}= v|_{\pazocal{O}}$
  for every bounded subset $\pazocal{O}\subset \Omega$. This fact
  along with \eqref{local-c} will be enough to
  prove that $\tilde v$ is a weak solution to \eqref{eq:Bardina-visc}
  defined in  $L^\infty_{\mathrm{loc}}(0,\infty; H^{2, h}_{ \gamma})\cap
    L^2_{\mathrm{loc}}(0,\infty; H^{3, h}_{ \gamma})$.
  Indeed, to conclude our analysis on $\Omega\times (\tau, T)$, and to
  prove that the weak formulation for $v^m$ is stable when $m \to +\infty$, we 
consider a proper family of test functions with
    separate variables and bounded supports (see, e.g., \cite{Ahn}).  Let
  $\{w_j\}_{j=1,\ldots, m}$ be the basis of the space $H_m$
  approximating $H^{3,h}_\gamma\cap H^2_0$, for $m\in \N$.  Let
  $\sigma=\sigma(t)$ be a continuously differentiable function on
  $[\tau , T ]$ with $\sigma{}(T) = 0$. Then, we set the following weak
  formulation (where $w_j(x)\sigma(t)$ are the tests) 
on 
$\Omega\X (\tau, T)$:
  \begin{align*}
    \int_\tau^T & (\psi\nabla v^m_t,\psi\nabla w_j) \sigma dt
    +\alpha^2\!\int_\tau^T(\psi\D_1\nabla v^m_t, \psi\D_1\nabla
    w_j)\sigma
    dt +\nu\! \int_\tau^T(\psi\Delta v^m, \psi\Delta w_j)\sigma dt\\
    \!\!\!  &+ \nu\alpha^2\int_\tau^T(\psi\D_1\Delta v^m,
    \psi\D_1\Delta w_j)\sigma dt + \int_\tau^T ( \psi B(v^m,v^m), \psi
    w_j \big)\sigma dt \\
    & = \int_\tau^T(\psi g,\psi w_j)\sigma dt
  \end{align*}
  for all $j = 1,\dots, m$. 
Using Lemma~\ref{utility-lemma} (the intersection $\textrm{supp}\, w_j
\cap \Omega$ is bounded) 
we will prove that the above relation passes to the limit as
  $m \to +\infty$. \smallskip

To proceed to the next steps, and prove that
  Lemma~\ref{utility-lemma} applies to our case, we set
  $X=H^{3,h}_\gamma(\pazocal O)$, $E=H^{2,h}_\gamma(\pazocal O)$,
  where $\pazocal O$ is any open set included in $\Omega$. Also,
  we choose $p=2,\, q=+\infty$ and, as already mentioned, we denote by $\Theta=(\tau,
    T)\subseteq \R$ the time interval, and by $\pazocal F = \{ v^m
    |_{\pazocal O} \}$ the approximating sequence.

\smallskip

  \noindent \textbf{STEP 2:} \emph{The approximating sequence
    $\{v^m\}$ satisfies condition \textrm{(ii)} in
    Lemma~\ref{utility-lemma}.} 
The boundedness of $v^m$ in $L^\infty(\tau, T; H^{2, h}_{ \gamma}(\pazocal{O}))\cap
  L^1(\tau, T; H^{3, h}_{ \gamma}(\pazocal{O}))$, $\pazocal{O}\subset
  \Omega$ open and bounded, follows directly from the boundedness of
  $v^m$ in $L^\infty_{\mathrm{loc}}(0,\infty; H^{2, h}_{ \gamma})\cap
  L^2_{\mathrm{loc}}(0,\infty; H^{3, h}_{ \gamma})$ proved in STEP 1.
 This concludes STEP 2.
\smallskip

\noindent
\textbf{STEP 3:} \emph{The approximating sequence $\{v^m\}$ satisfies
 condition \textrm{(i)} in Lemma~\ref{utility-lemma}.}

First, we will prove that $\{ v^m|_{\pazocal{O}} \}$ is relatively
compact in $L^2(\tau , T ; H^{2,h}_\gamma (\pazocal{O}))$ for all
bounded subsets $\pazocal{O} \subset \Omega$ by using
Lemma~\ref{utility-lemma}.  We only have to check that $\{v^m\}$
satisfies condition (i) in Lemma~\ref{utility-lemma}, i.e.,
\begin{equation*}
    \underset{k\to 0}{\lim}\, 
    \underset{m\in \N}{\sup} 
  \|\tau_k v^m -  v^m\|_{L^2(\tau ,T-k; H^{2,h}_{\gamma}(\pazocal{O}))}
  = 0.
\end{equation*}

Also here, to keep the notation as compact as
  possible, 
we write $v$ in place of $v^m$.  Consider $k > 0$ arbitrarily small
and set $V_k(t):= (v(t +k)- v(t))$. We take the product of
\eqref{eq:Bardina-visc} against $-\psi^2 w_j$, integrate in time over
$(t,t+k) \subset (\tau , T )$; subsequently, we multiply it by
\begin{equation*}
  a^m_j(t+k)-a^m_j(t)\, ,
\end{equation*}
and by summing over $j$, we reach (here below, we reintroduce the
  $dx$ in the space-depending integrals) 
\begin{equation} \label{stima-vt}
  \begin{aligned}
    \|\psi & \nabla V_k(t)\|^2 + \alpha^2
    \|\psi \D_1\nabla V_k(t)\|^2  \\
    =& - \nu \int_t^{t+k} \int \psi \Delta v\, \psi \Delta V_k(t) dx
    ds - \nu\alpha^2 \int_t^{t+k} \int \psi
    \D_1\Delta v \, \D_1\Delta V_k(t) dx ds \\
    & + \int_t^{t+k}\big( B(v,v) - g, V_k(t)\psi^2\big)ds
    -\int \nabla V_k(t)   \, V_k(t) \nabla \psi^2dx \\
    &+ \alpha^2 \int \D_1^2\nabla V_k(t) \, V_k(t)\nabla\psi^2dx -
    \alpha^2\int \D_1\nabla
    V_k(t) \nabla  V_k(t)\D_1 \psi^2 dx  \\
    &-\nu\int_t^{t+k}\int \Delta v \, V_k(t) \Delta\psi^2dxds
    - 2\nu \int_t^{t+k}\int \Delta v \, \nabla V_k(t)  \nabla\psi^2dxds \\
    & -2\nu\alpha^2\int_t^{t+k}\int \D_1 \Delta v \D_1\nabla V_k(t)
    \nabla \psi^2 \\
    &- \nu\alpha^2\int_t^{t+k}\int \D_1\Delta v \D_1 V_k(t)
    \Delta \psi^2dxds\\
    & -\nu\alpha^2\int_t^{t+k}\int \D_1\Delta v \Delta V_k(t) \D_1
    \psi^2dxds \\
    &-2\nu\alpha^2\int_t^{t+k}\int \D_1\Delta v \nabla V_k(t)
    \D_1\nabla\psi^2dxds\\
    &- \nu\alpha^2\int_t^{t+k}\int \D_1 \Delta v \, V_k(t)
    \D_1\Delta\psi^2dxds
  \end{aligned}
\end{equation}
from which, integrating on $(\tau, T- k)$ in $dt$, we get
\begin{equation}\label{eq:stima-2}
  \begin{aligned}
    \int_\tau^{T-k} &\| \psi \nabla V_k(t)\|^2dt +
    \alpha^2\int_\tau^{T-k}
    \|\psi \D_1\nabla  V_k(t)\|^2dt  \\
    =& - \nu \int_\tau^{T-k}\int_t^{t+k} \int \psi
    \Delta v\, \psi \Delta V_k(t) dx dsdt \\
    &- \nu\alpha^2 \int_\tau^{T-k} \int_t^{t+k} \int \psi
    \D_1\Delta v \, \D_1\Delta V_k(t) dx dsdt \\
    & + \int_\tau^{T-k}\int_t^{t+k}\big( B(v,v) - g, V_k(t)
    \psi^2\big)ds dt + \int_\tau^{T-k} \sum_{j=1}^{10} J_j.
  \end{aligned} \!\!\!\!\!
\end{equation}
For the terms in the right-hand side of the above equality, exploiting
the Fubini's theorem {along with the properties of the following functions}
\begin{equation*}
  \overline{s} =
  \left\{ \begin{array}{ll}
      \tau \!& \textrm{if }\, s \leq \tau\\
      s \!& \textrm{if }\, \tau < s \leq T - k\\
      T - k \!& \textrm {if }\, s > T - k\\
    \end{array} \right .\,\, \textrm{ and }\,\,\, 
  \overline{s - k} =
  \left\{ \begin{array}{ll}
      \tau \!& \textrm{if }\, s-k \leq \tau\\
      s-k \!& \textrm{if }\, \tau < s - k \leq T - k\\
      T - k \!& \textrm {if }\, s -k  > T - k\\
    \end{array}\right.
\end{equation*}
{which are used to change the order of integration, we
get}
\begin{align*} \allowdisplaybreaks \nu \bigg|\int_\tau^{T-k} &
  \int_t^{t+k} \int \psi
  \Delta v(s)\, \psi \Delta V_k(t) dx ds dt \bigg|\\
  \leq \nu &\int_\tau^{T-k} \int_t^{t+k} \| \psi
  \Delta v(s)\| \| \psi \Delta V_k(t)\| ds dt\\
  \leq \nu & \int_\tau^{T-k} \| \psi \Delta V_k(t)\| \int_t^{t+k} \|
  \psi
  \Delta v(s)\| ds dt\\
  \displaybreak[0] \leq \nu & \int_\tau^T \| \psi \Delta v(s)\|
  \int_{\overline{s-k}}^{\overline{s}} \| \psi
  \Delta  V_k(t)\| dt ds\\
  \leq \nu & \int_{t}^{T} \| \psi \Delta v(s)\|
  \Big(\int_{\overline{s-k}}^{\overline{s}}1 dt\Big)^{1/2}
  \Big(\int_{\overline{s-k}}^{\overline{s}}\| \psi \Delta  V_k(t))\|^2 dt\Big)^{1/2}ds \\
  \leq 2\nu & k^{1/2} (T-\tau)^{1/2}\| \psi \Delta v\|_{L^2(\tau, T;
    L^2)}
  \Big(  \int_{\tau}^{T}  \| \psi \Delta v(s)\|^2 ds\Big)^{1/2} \\
  \leq 2\nu & k^{1/2} (T -\tau)^{1/2} \| \psi \Delta
  v\|_{L^2(\tau, T; L^2)}^2 \\
  \leq C &\nu k^{1/2} (T -\tau)^{1/2}\,.
\end{align*}
With similar computations, we also obtain that
\begin{equation*}
  \begin{aligned}
    \nu \alpha^2\bigg| \int_\tau^{T-k} &\int_t^{t+k} \int \psi
    \D_1\Delta v \, \D_1\Delta V_k(t)  dx dsdt \bigg|\\
    &\leq 2\nu\alpha^2 k^{1/2} \| \psi \D_1\Delta
    v\|_{L^2(\tau, T; L^2)}^2 (T      -\tau)^{1/2}\\
    &\leq C\nu\alpha^2 k^{1/2} (T -\tau)^{1/2}.
  \end{aligned}
\end{equation*}  
Now, by exploiting \eqref{eq:l4-pesata}, we have that
\begin{equation*}
  \begin{aligned}
    \bigg| \int_\tau^{T-k} & \int_t^{t+k} \big(B(v,v), \psi^2 V_k(t)  \big)dsdt \bigg|\\
    \leq & \bigg|\int_\tau^{T-k}\int_t^{t+k}\int \big[\D_1 v \Delta
    V_k(t)
    \D_2 \psi^2 - \D_2 v \Delta V_k(t)   \D_1 \psi^2\big]dxdsdt \bigg|\\
    & + \bigg|\int_\tau^{T-k}\int_t^{t+k}\int \big[\D_1 v \Delta v \,
    \psi^2\D_2V_k(t)  - \D_2 v \Delta v \,\psi^2 \D_1 V_k(t) \big]dxdsdt\bigg|\\
    \leq & C\epsilon\int_\tau^{T-k}\int_t^{t+k}\|\Delta v\| \big(
    \|\nabla \D_1 v \| + \|\nabla \D_2 v \| \big) \big(\|\psi \nabla
    V_k(t)\| + \epsilon \|\psi V_k(t)\|\big)ds dt\\
    & + C\int_\tau^{T-k}\int_t^{t+k}\|\psi\Delta v\| \big( \|\nabla
    \D_1 v \| + \|\nabla \D_2 v \| \big) \big(\|\psi \Delta V_k(t)\|
    + \epsilon\|\psi \nabla V_k(t)\|)ds dt\\
    \leq & Ck^{1/2} (T-\tau)^{1/2}\big( \| v\|_{L^2(\tau, T;
      H^1_\gamma)}^2 +\| v\|_{L^2(\tau, T;
      H^{2}_\gamma)}^2) \\
    \leq & C k^{1/2} (T-\tau)^{1/2}.
  \end{aligned}
\end{equation*}

Now, we estimate the terms $J_i$, $i=1, \ldots, 10$.  Let us start
  with $J_1$ to get
\begin{equation} \label{eq:J1}
  \begin{aligned}
    \int_\tau^{T-k} |J_1 |dt &\leq \epsilon \int_\tau^{T-k} \int
    |\psi\nabla V_k(t)|\, |\psi V_k(t) |
    dxdt\\
    &\leq \epsilon\sqrt{\frac{2}{\lambda_1}} \int_\tau^{T-k} \|\psi
    \nabla V_k(t)\|^2dt.
  \end{aligned}
\end{equation}
For the terms $J_2$ and $J_3$ we have that
  \begin{equation} \label{eq:J2}
    \begin{aligned}
      \int_\tau^{T-k} |J_2|dt \leq & \alpha^2
      \epsilon\int_\tau^{T-k}\int |\psi\D_1 \nabla
      V_k(t)|\, |\psi \D_1 V_k(t) |dt\\
      &+ \alpha^2 \epsilon\int_\tau^{T-k} \int |\psi\D_1 \nabla
      V_k(t)|\, |\psi V_k(t)|dt\\
      \leq & C \alpha^2 \epsilon 
      \int_\tau^{T-k} \|\psi
      \D_1\nabla V_k(t)\|^2dt \\
      &+ C\alpha^2 \epsilon  \int_\tau^{T-k}
      \|\psi
      \D_1\nabla V_k(t)\| \|\psi  \nabla V_k(t)\| dt\\
      \leq & C\alpha^2 \epsilon
      \int_\tau^{T-k} \|\psi
      \D_1\nabla V_k(t)\|^2dt \\
      &+ C\alpha^2\epsilon (T-\tau)^{1/2} \|\psi \nabla v\|_{L^\infty
        (\tau, T; L^2)} \|\psi \nabla v\|_{L^2 (\tau, T; L^2)}
    \end{aligned}
  \end{equation}
 and that
  \begin{equation} \label{eq:J3}
    \begin{aligned}
      \int_\tau^{T-k} |J_3|dt \leq & \alpha^2 \epsilon
      \int_\tau^{T-k}\int |\psi\D_1 \nabla
      V_k(t)|\, |\psi \nabla V_k(t) |dxdt\\
      \leq & C \alpha^2 \epsilon 
      \int_\tau^{T-k} \|\psi \D_1\nabla V_k(t)\| \|\psi \nabla
      V_k(t)\|
      dt\\
      \leq & C\alpha^2 \epsilon
      \int_\tau^{T-k} \|\psi \D_1\nabla V_k(t)\| ^2  dt\\
      &+ C\alpha^2\epsilon (T-\tau)^{1/2} \|\psi \nabla v\|_{L^\infty
        (\tau, T; L^2)} \|\psi \nabla v\|_{L^2 (\tau, T; L^2)}.
    \end{aligned}
  \end{equation}
 Next, for the terms $J_4$ and $J_5$ we have
\begin{align*}
  \int_\tau^{T-k}|J_4| dt\leq & \nu\int_\tau^{T-k}\int_t^{t+k}\int
  \Delta  v  \,V_k(t)  \Delta\psi^2dxdsdt\\
  \leq & \epsilon \nu\int_\tau^{T-k}\int_t^{t+k}
  \|\Delta  v(s)\|  \, \|\psi V_k(t) \|  dsdt\\
  \leq & C\epsilon\nu
  k^{1/2}(T-\tau)^{1/2} 
  \| \psi v\|_{L^2(\tau, T; L^2)},
\end{align*}
where we used again \eqref{control-on-w-funcs}, and
\begin{align*}
  \int_\tau^{T-k} |J_5|dt\leq & 2\epsilon\nu \int_\tau^{T-k}
  \int_t^{t+k}\|\psi \Delta v(s)\| \, \|\psi \nabla V_k(t)\|
  dsdt \\
  \leq& 4\nu\epsilon k^{1/2} \| v\|_{L^2(\tau, T; H^{1}_\gamma)}
  \int_{\tau}^{T} \| \psi \Delta
  v(s)\| ds\\
  \leq & C\nu\epsilon k^{1/2} (T-\tau)^{1/2}
  \| v\|_{L^2(\tau, T; H^2_\gamma)}^2.
\end{align*}
$J_6$ is estimated as follows:
\begin{align*}
  \int_\tau^{T-k} |J_6|dt\leq & 2\nu\alpha^2\epsilon
  \int_\tau^{T-k}\int_t^{t+k}\|\psi\D_1\Delta v\|\, \|\psi\D_1  \nabla V_k(t)) \|dsdt\\
  \leq & 4\nu\epsilon k^{1/2} \alpha^2\| v\|_{L^2(\tau, T;
    H^{2,h}_\gamma)} \int_{\tau}^{T} \| \psi
  \D_1\Delta v(s)\| ds\\
  \leq & C\nu\epsilon k^{1/2} (T-\tau)^{1/2}\alpha^2\|
  v\|^2_{L^2(\tau, T; H^{3,h}_\gamma)}.
\end{align*}
In a very similar way we also get
\begin{align*}
  \int_\tau^{T-k} |J_7|dt\leq & \nu\epsilon \alpha^2C k^{1/2} \|
  v\|_{L^2(\tau, T; H^{1,h}_\gamma)}
  \int_{\tau}^{T}  \| \psi \D_1\Delta v(s)\| ds,\\
  \int_\tau^{T-k} |J_8|dt\leq & \nu\alpha^2\epsilon C k^{1/2} \|
  v\|_{L^2(\tau, T; H^{2}_\gamma) }   \int_{\tau}^{T}  \| \psi \D_1\Delta v(s)\| ds,\\
  \int_\tau^{T-k} |J_9|dt\leq & \nu\alpha^2\epsilon C k^{1/2} \|
  v\|_{L^2(\tau, T; H^{1}_\gamma)} \int_{\tau}^{T} \| \psi
  \D_1\Delta v(s)\| ds,\\
  \intertext{and} \int_\tau^{T-k} |J_{10}|dt\leq &
  \nu\alpha^2\epsilon C k^{1/2} \| v\|_{L^2(\tau, T; L^2_\gamma)}
  \int_{\tau}^{T} \| \psi \D_1\Delta v(s)\| ds.
\end{align*}
Whence, for $i=7,8, 9, 10$, we have that
\begin{equation}
  \int_\tau^{T-k} |J_i|dt\leq C k^{1/2} (T-\tau)^{1/2}
  \| v\|^2_{L^2(\tau, T; H^{3, h}_\gamma) }. 
\end{equation}
To conclude we reabsorb the terms \eqref{eq:J1}, \eqref{eq:J2} and
\eqref{eq:J3} in the left-hand side of \eqref{stima-vt}. Then, using
standard manipulations along with the above estimates we get
\begin{equation*}
  \begin{aligned}
    \int_\tau^{T-k}\!\|\psi \nabla V_k(t)\|^2dt\! +\!  \alpha^2
    \int_\tau^{T-k}\! \|\psi \D_1\nabla V_k(t)\|^2 dt \leq C
    (T-\tau)^{1/2}k^{1/2} \to 0 \textrm{ as } k \to 0;
  \end{aligned}
\end{equation*}
this concludes STEP 3.
    
    \smallskip
    
\noindent \textbf{STEP 4:} {\emph{Application of
  Lemma~\ref{utility-lemma} to $\{v^m\vert_{\pazocal{O}}\}$.}}

By Lemma~\ref{utility-lemma}, we deduce that $\{v^m|_{\pazocal{O}}\}$
is relatively compact in the space $L^2(\tau , T ; H^{2,
  h}_\gamma(\pazocal{O}))$, uniformly with respect to $\pazocal{O}$,
and we can extract a subsequence, still denoted by
$\{v^m|_{\pazocal{O}}\}$, that strongly converges in $L^2(\tau , T ;
H^{2,h}_\gamma(\pazocal{O}))\}$ for all $\pazocal{O} \subset
\Omega$. Therefore, $v^w \to v$ in
$L^2(\tau,T;(H^{2,h}_\gamma)_{\mathrm{loc}(\Omega)})$ for a suitable
$v$.

By STEP 1 we have that $\{v^m\}$ is also bounded in $L^2(\tau, T;
H^{3,h}_\gamma)$ (let us recall that $v^m|_{\pazocal{O}}$ is the
restriction of $v^m$ to $\pazocal{O}$), and hence $v^m \rightharpoonup
\tilde v$ weakly in $L^2(\tau, T; H^{3,h}_\gamma(\Omega))$.  Thus,
due to the uniqueness of the limit, one has that $(\tilde
v)|_\pazocal{O}= v$ on every ball $\pazocal{O}\subset
\Omega$. Therefore $v$ is defined on $\Omega$ and $v\in L^2(\tau,T;
H^{3,h}_\gamma(\Omega))$.  \smallskip

\noindent \textbf{STEP 5:} \emph{The limiting function $v$ is a weak
  solution.}

We now show that $v$ is a weak solution of
problem~\eqref{eq:Bardina-visc}.
%
Hence, we have to check that, for any $w_j\in H^{3,h}_\gamma \cap
H^2_0$ (the elements of the basis for the considered test functions),
the weak formulation for $v^m$ passes to the limit as $m\to +\infty$.
It is enough to verify that the nonlinear term passes to the
  limit, i.e., setting $\Omega' = \textrm{supp}\, w_j \cap \Omega$, we
  take into account the difference
\begin{align*} \allowdisplaybreaks \Bigg| \int_\tau^T & \big(B(v^m,
  v^m), \psi^2 w_j\sigma\big) dt -
  \int_\tau^T \big(B(v, v), \psi^2 w_j\big)\sigma dt\Bigg| \\
  = & \left| \int_\tau^T \big(B(v^m, v^m-v), \psi^2 w_j\big) \sigma dt
    +
    \int_\tau^T \big(B(v^m- v, v), \psi^2 w_j\sigma\big) dt\right| \\
  = & \Bigg| \int_\tau^T \int_\Omega\big[\D_1(v^m -v) \Delta v^m \D_2
  (\psi^2 w_j) - \D_2(v^m -v) \Delta v^m \D_1 (\psi^2 w_j)
  \big]\sigma dxdt\\
  &+ \int_\tau^T \int_{\Omega}\big[\D_1v \Delta (v^m-v)\D_2 (\psi^2
  w_j) - \D_2 v \Delta(v^m - v)\D_1(\psi^2 w_j)\
  \big]\sigma dxdt\Bigg| \\
  = & \Bigg| \int_\tau^T \int_{\Omega'}\big[\D_1(v^m -v) \Delta v^m
  \D_2 (\psi^2 w_j) - \D_2(v^m -v) \Delta v^m \D_1 (\psi^2 w_j)
  \big]\sigma dxdt\\
  &+ \int_\tau^T \int_{\Omega'}\big[\D_1v \Delta (v^m-v)\D_2 (\psi^2
  w_j) - \D_2 v \Delta(v^m - v)\D_1(\psi^2 w_j)\ \big]\sigma
  dxdt\Bigg| .
\end{align*}
We estimate singularly the above terms, so that
\begin{align*}
  \Bigg| &\int_\tau^T \int_{\Omega'}\big[\D_1(v^m -v) \Delta v^m
  \D_2 (\psi^2 w_j)\big]\sigma \Bigg|dxdt\\
  \leq& C\int_\tau^T \int |\D_1 (v^m -v )|\, |\psi \Delta v^m|\,
  \big( |\psi\D_2 w_j| +\epsilon |\psi w_j  |)dxdt\\
  \leq &C\int_\tau^T\|\psi \Delta v^m\|\, \| \D_1 (v^m -v )\|_{L^4}
  \big(\| \psi\D_2 w_j\|_{L^4}
  +\epsilon \|\psi w_j\|_{L^4} \big)dt\\
  \leq & C\| w_j\|_{H^{3,h}_\gamma} \int_\tau^T\|\psi \Delta v^m\|\,
  \| \D_1 \nabla (v^m -v )\|dt.
\end{align*}
Again, for the second term
\begin{align*}
  \Bigg| \int_\tau^T & \int_{\Omega'} \D_2(v^m -v) \Delta v^m \D_1
  (\psi^2 w_j)
  \big]\sigma dxdt \Bigg|\\
  \leq & C\int_\tau^T \int |\D_1 \D_2 (v^m -v )|\, |\psi \Delta v^m|\,
  |\psi
  w_j  |dxdt \\
  &+ C\int_\tau^T \int |\D_2 (v^m -v )|\, |\psi \D_1\Delta v^m|\,
  |\psi
  w_j  |dxdt\\
  \leq & C\|\psi w_j \|_{L^\infty} \Bigg[ \int_\tau^T\|\D_1\nabla
  (v^m-v)\|\, \|\psi
  \Delta v^m\|dt \\
  &+ \int_\tau^T\|\nabla (v^m-v)\|\, \|\psi \D_1\Delta v^m\|dt \Bigg]
\end{align*}
and
\begin{align*}
  \Bigg| \int_\tau^T &\int_{\Omega'}\big[\D_1v \Delta (v^m-v)\D_2
  (\psi^2 w_j)\sigma dxdt\Bigg|\\
  \leq & C\int_\tau^T \int |\psi \D_1 \nabla v |\, |\nabla (v^m- v)|
  \big(|\psi\D_2 w_j|
  +|\epsilon \psi   w_j  |\big) dxdt\\
  & + C\int_\tau^T \int |\psi \D_1 v |\, |\nabla (v^m- v)|\big(
  |\epsilon \psi w_j| + |\epsilon\psi \D_2 w_j|
  + |\epsilon \psi \nabla w_j| +  |\psi \D_2 \nabla w_j |\big) dxdt\\
  \leq & C\int_\tau^T \, \| \nabla (v^m- v)\|\, \|\psi \D_1\nabla
  v\|_{L^4}\big( \|\psi\D_2 w_j\|_{L^4}
  + \| \epsilon\psi w_j\|_{L^4} \big)dxdt\\
  & + C\int_\tau^T \|\psi \D_1 v \|_{L^\infty}\|\nabla (v^m- v)\|\big(
  \|\epsilon \psi w_j \|+ \|\epsilon\psi \D_2 w_j\| + \|\epsilon \psi
  \nabla w_j\| +
  \|\psi \D_2 \nabla w_j \|\big)dxdt\\
  \leq & C\| w_j\|_{H^{3,h}_\gamma} \int_\tau^T \| \nabla (v^m -v
  )\|\, (\|\psi \D_1 \nabla v\| + \|\psi \D_1 \Delta v\|)dt.
\end{align*}
Analogously, we have that
\begin{align*}\allowdisplaybreaks
  \Bigg| \int_\tau^T & \int_{\Omega'} \D_2 v \Delta(v^m -
  v)\D_1(\psi^2 w_j)
  \sigma dxdt\Bigg|\\
  \leq & C \int_\tau^T\int_{\Omega'} |\D_2\nabla v|\, |\nabla (v^m
  -v)|
  \,  | \D_1(\psi^2 w_j)| dxdt \\
  \displaybreak[0] &+ C \int_\tau^T\int_{\Omega'} |\D_2 v|\, |\nabla
  (v^m -v)|
  \, |\D_1\nabla(\psi^2 w_j)| dxdt\\
  \leq & C \|w_j\|_{H^{3,h}_\gamma} \int_\tau^T \|\nabla (v^m -v)\|\,
  \|\psi\D_2\nabla v\|dt\\
  &+ \int_\tau^T \|\nabla (v^m -v)\|\, \|\psi\D_2 v\|_{L^4} \|\big(
  \|\epsilon \psi w_j\|_{L^4} + \|\epsilon\psi \D_1 w_j\|_{L^4}\\
  &+ \|\epsilon \psi \nabla w_j\|_{L^4} +  \|\psi \D_1 \nabla w_j \||_{L^4}\big) dt\\
  \leq & C\|w_j\|_{H^{3,h}_\gamma} \int_\tau^T \|\nabla (v^m -v)\|\,
  \|\psi\D_2\nabla v\|dt\,.
\end{align*}
Therefore, all the above four terms go to $0$ as $m$ goes to
$+\infty$. \smallskip

\noindent{\textbf{STEP 6:}} \emph{Continuity, i.e. $v\in C([0, T],
  H^{1,h}_\gamma)$, $T>0$.}

Fix any $T>0$. First, we notice that $v_t, \D_1 v_t \in
L^2(0,T;H^1_\gamma(\Omega)')$ since, thanks to
Theorem~\ref{thm:existence-no-w}, they belong to
$L^2(0,T;H^1(\Omega))$ and $H^1_\gamma\subset H^1 \subset
(H^1_\gamma)'$. Moreover, from Theorem~\ref{thm:weight-existence}, we
have in particular that $v, \D_1 v \in
L^2(0,T;H^1_\gamma(\Omega))$. By interpolation, we conclude that $v,
\D_1 v \in C([0,T];L^2_\gamma(\Omega))$, which is the claim.}

\end{proof}

\appendix

\section{Properties of the weight function} \label{app:weight}

First, we introduce the function $\phi (x, \epsilon, \gamma)=\phi (x,
3, 2, \epsilon, \gamma) = (1 + |\epsilon x_1|^{3}+|\epsilon
x_2|^{2})^{\gamma}$ and observe that
\begin{equation} \label{control-on-derivatives} |\D^{\beta} \phi| \leq
  C \epsilon^{|\beta |} \phi^{1/2},
\end{equation}
for every multi-index $\beta=(\beta_1,\beta_2)$, $0<|\beta|\leq 3$ and
$\beta_2\leq 2$, provided $0<\gamma\leq 2/3$.

Actually, $|\D_1 \phi| \leq 3 \epsilon \phi^{1/2}$ if and only if
\begin{equation*}
  |\D_1 (1 + |\epsilon x_1|^{3}+
  |\epsilon x_2|^{2})^{\gamma}|
  = \frac{3\gamma \epsilon |\epsilon x_1|^2}{(1 +
    |\epsilon x_1|^{3}+|\epsilon x_2|^{2})^{1-\gamma} }
  \leq 3 \epsilon   (1 + |\epsilon x_1|^{3}+
  |\epsilon x_2|^{2})^{\gamma/2},
\end{equation*}
which is satified provided that
\begin{equation} \label{derivative} 2\leq 3\left(1
    -\frac{1}{2}\gamma\right) \iff 0 < \gamma \leq \frac{2}{3}.
\end{equation}
Clearly, under this hypothesis we also have that $|\D_2 \phi| \leq
3\epsilon \phi^{1/2}$.
 
Moreover, we have that the following relation holds true:
\begin{equation*}
  \begin{aligned}
    |\D_1^2 (1 + |\epsilon x_1|^{3}+ |\epsilon x_2|^{2})^{\gamma}|
    &\leq \frac{6\epsilon^2|\epsilon x_1|}{(1 + |\epsilon
      x_1|^{3}+|\epsilon x_2|^{2})^{1-\gamma}} +
    \frac{9(1-\gamma)\epsilon^2|\epsilon x_1|^4}{(1 +
      |\epsilon x_1|^{3}+|\epsilon x_2|^{2})^{2-\gamma}}\\
    & \leq 15 \epsilon^2 (1 + |\epsilon x_1|^{3}+ |\epsilon
    x_2|^{2})^{\gamma/2},
  \end{aligned}
\end{equation*}
provided that $\gamma \leq 4/3$. Under this last condition one can
easily check that $|\D^{\beta} \phi| \leq C\epsilon^{|\beta|}
\phi^{1/2}$ for every multi-index $\beta=(\beta_1,\beta_2)$,
$|\beta|\leq 2$.  More in general, taking $\gamma \leq 2/3$, as in
\eqref{derivative}, we have \eqref{control-on-derivatives}.

\medskip

Now, consider the map $g(\tau)$, $\tau\geq 0$ (see \cite[(3.5),
p. 561]{Babin-1992}), given by
\begin{equation} \label{function-g}
  \begin{aligned}
    & g(\tau) = 1/4 + \tau^2,  \quad \quad && 0\leq \tau\leq 1/2\\
    &g(\tau) = \tau ,  \quad &&  1/2\leq \tau\leq \rho\\
    &g(\tau) = \rho + 1/2 - (\rho + 1-\tau)^2/2 , \quad && \rho\leq
    \tau\leq \rho +1\\
    &g(\tau) = \rho + 1/2,\quad &&\quad \! \quad\tau\geq \rho+1,
  \end{aligned}
\end{equation}
and obviously $g(\tau)= \tau$ when $\rho = +\infty$, $\tau \geq 1/2$.
 
Define the weight function $\varphi$ as
\begin{equation} \label{weight-funcs} \varphi(x_1, x_2, \epsilon,
  \rho, \gamma):= \Big(g\big((1+|\epsilon x_1|^3 + |\epsilon
  x_2|^2)^{1/2}\big)\Big)^{2\gamma};
\end{equation}
then it holds true that
\begin{equation*}
  \underset {\rho \to +\infty}{\lim} \varphi(x_1, x_2, \epsilon, \rho,
  \gamma)
  = (1+|\epsilon x_1|^3 +
  |\epsilon x_2|^2)^{\gamma}.
\end{equation*}

We are ready to show Lemma~\ref{w-funcs}.

\begin{proof}[Sketch of the proof of Lemma~\ref{w-funcs}]
  This is due, essentially, to the fact that $0\leq g'\leq 1$,
  $g'(\tau)\equiv 0$ when $\tau > \rho+1$, to $g(\tau) \sim \tau$ when
  $1/2\leq \tau\leq \rho+1$, and to the properties of the weight
  $\phi$ when $\gamma \leq 2/3$.

  Take
  \[
  \tau= \phi (x, 3, 2, \epsilon, 1/2) =(1 + |\epsilon x_1 |^3 +
  |\epsilon x_2|^2)^{1/2}
  \]
  and begin by considering $|\beta|=1$, with $\D^\beta =\D_1$ (the
  case of $\D^\beta =\D_2$ is easier and left to the reader). We want
  to prove that
  \begin{equation} \label{eq:first-derivative}
    {|\D_1 \varphi| = 2\gamma g(\phi)^{2\gamma-1}
      g'(\phi) |\D_1 \phi|} \leq C\epsilon \varphi^{1/2},
  \end{equation}
  where $\phi := \phi(x, 3, 2, 1/2)$.

  From ${|\D_1 \phi|=3\epsilon} |\epsilon x_1|^2/2\phi \leq
  C{\epsilon}|\epsilon x_1|^{1/2}$ and the definition
  \eqref{function-g}, we get {\begin{equation*} |\D_1 \varphi| \leq
      \left\{ \begin{array}{ll}
          \gamma \epsilon C(1/4 + \phi^2)^{2\gamma -1} |\epsilon x_1|^{1/2},  &  0\leq \phi\leq 1/2\\
          \gamma \epsilon C\phi^{2\gamma -1} |\epsilon x_1|^{1/2}, &
          1/2\leq \phi\leq
          \rho\\
          \gamma \epsilon C \Big(\rho + 1/2 - (\rho +1 -
          \phi)^2/2\Big)^{2\gamma -1} (\rho + 1 - \phi) |\epsilon
          x_1|^{1/2}, & \rho\leq \phi\leq
          \rho + 1\\
          0 & \qquad\! \phi\geq \rho + 1.
        \end{array}
      \right.
    \end{equation*}}
  Consider the case of $\D_1 \varphi$ when $\rho \leq \phi \leq
  \rho+1$, the others are similar.

  The condition ${|\D_1 \varphi|} \leq \epsilon C \varphi^{1/2}$
  {yields if we show that}
  \begin{equation*}
    {\gamma \epsilon}  \Big(\rho + 1/2 - (\rho +1 -
    \phi)^2/2\Big)^{2\gamma -1} (\rho + 1 - \phi) |\epsilon x_1|^{1/2}
    \leq \epsilon C \Big(\rho + 1/2 - (\rho +1 -
    \phi)^2/2\Big)^{\gamma},
  \end{equation*}
  i.e.
  \begin{equation*}
    \gamma \Big(\rho + 1/2 - (\rho +1 -
    \phi)^2/2\Big)^{\gamma -1} (\rho + 1 - \phi) |\epsilon x_1|^{1/2}\leq C.
  \end{equation*}
  Hence, recalling that $\rho \leq \phi\leq \rho +1$, we obtain
  \begin{equation} \label{eq:utility-computation-1}
    \begin{aligned}
      \frac{ (\rho + 1 - \phi) |\epsilon x_1|^{1/2}}{\Big(\rho + 1/2 -
        (\rho +1 - \phi)^2/2\Big)^{1- \gamma } } &\leq \frac{
        |\epsilon x_1|^{1/2}}{\Big(\rho + 1/2 - (\rho +1 -
        \phi)^2/2\Big)^{1- \gamma } } \\
      &\leq \frac{ |\epsilon x_1|^{1/2}}{\rho^{1- \gamma } } \leq
      \frac{\phi^{1/3}}{\rho^{1-\gamma}} \leq \frac{(1 +
        \rho)^{1/3}}{\rho^{1-\gamma}}
    \end{aligned}
  \end{equation}
  which is bounded for $\gamma \leq 2/3$. Then, relation
  \eqref{eq:first-derivative} follows.

  Now, consider the case of the second derivatives (actually we take
  into account just the mixed partial derivatives $\D^\beta\varphi =
  \D^2_{12}\varphi$ ).  We have that
  \begin{align*}
    \D_2 \Big( 2\gamma g(\phi)^{2\gamma-1} g'(\phi) \D_1 \phi \Big) =
    & 2\gamma (2\gamma -1) g(\phi)^{2\gamma-2}(g'(\phi))^2\partial_2
    \phi \partial_1 \phi \\
    & + 2\gamma g(\phi)^{2\gamma -1 }g''(\phi) \D_2\phi\D_1\phi +
    2\gamma g(\phi)^{2\gamma -1 }g'(\phi) \D_{12}^2\phi\\
    =: & A_1 + A_2 + A_3.
  \end{align*}
  Let $\rho \leq \phi\leq \rho+1$. In what follows we develop the
  calculations only for this case; the others are similar, if not more
  elementary.

  We prove that $|A_i|\leq C\epsilon^2 \varphi^{1/2}$, for
  $i=1,2,3$. First consider $A_1$. {Recalling that $x_2$ is bounded,
    we} have that
  \begin{equation*}
    {|A_1|}\leq C\epsilon^2 g(\phi)^{2\gamma-2} \big( \rho +1 -\phi\big)^2
    |\epsilon x_1|^{1/2} \leq C\epsilon^2 g(\phi)^{2\gamma-2}  |\epsilon x_1|^{1/2},
  \end{equation*}
  and hence condition $|A_1|\leq C\epsilon^2 \varphi^{1/2}$ follows by
  requiring
  \begin{equation}
    \epsilon^2 g(\phi)^{2\gamma-2}  |\epsilon x_1|^{1/2} \leq C\epsilon^2
    g(\phi)^\gamma \textrm{ or equivalently }
    g(\phi)^{\gamma-2}  |\epsilon x_1|^{1/2} \leq C,
  \end{equation}
  with $\rho\leq \phi \leq \rho +1$.  Exploiting the same computations
  as in \eqref{eq:utility-computation-1}, we obtain
  \begin{equation*}
    \frac{ |\epsilon x_1|^{1/2}}{\Big(\rho + 1/2 - (\rho +1 -
      \phi)^2/2\Big)^{2- \gamma } } \\
    \leq \frac{ |\epsilon x_1|^{1/2}}{\rho^{2- \gamma } }
    \leq \frac{\phi^{1/3}}{\rho^{2-\gamma}}
    \leq \frac{(1 + \rho)^{1/3}}{\rho^{2-\gamma}} \leq C,
  \end{equation*}
  where the last inequality is satisfied for $\gamma\leq 5/3$.

  As for $A_2$, one can see that $|A_2|\leq C\epsilon^2
  g(\phi)^{2\gamma-1} |\epsilon x_1|^{1/2}$.  So $|A_2|\leq
  C\epsilon^2 \varphi^{1/2}$ if
  \begin{equation*}
    \epsilon^2 g(\phi)^{2\gamma-1} |\epsilon x_1|^{1/2} \leq C\epsilon^2
    g(\phi)^{\gamma}
    \textrm{ or equivalently } g(\phi)^{\gamma-1} |\epsilon x_1|^{1/2}
    \leq C,
  \end{equation*}
  which is certainly verified under the assumption that $\gamma \leq
  2/3$.

  {Since $|A_3|\leq C g(\phi)^{2\gamma -1} |\D_{12}^2\phi|$, the
    condition $|A_3|\leq C\epsilon^2 \varphi^{1/2}$ is satisfied
    provided that
    \begin{equation} \label{conticonti-e-conti} \epsilon^2
      g(\phi)^{2\gamma -1} \frac{|\epsilon x_1|^2 |\epsilon
        x_2|}{\phi^3} \leq C\epsilon^2 g(\phi)^\gamma, \textrm{ that
        is } g(\phi)^{\gamma -1} \frac{|\epsilon x_1|^2 |\epsilon
        x_2|}{\phi^3} \leq C.
    \end{equation}
    We conclude using the fact that $\gamma\leq 2/3$ and that
    \[
    \frac{|\epsilon x_1|^2 |\epsilon x_2|}{\phi^3}\leq
    \frac{\phi^{7/3} }{\phi^3} \leq 1\,.
    \]}

  Using the previous computations along with the fact that
  $g'''(\tau)\equiv 0$, the general case \eqref{control-on-w-funcs}
  follows directly.
\end{proof}

\noindent \textbf{Acknowledgments:} The authors are members of
  INdAM and GNAMPA. The first author was partially supported by the
  project GNAMPA 2015 ``{\it Dinamiche non autonome, sistemi
    hamiltoniani e teoria del controllo}''.

\end{document}